\newtheorem{theorem}{Theorem}[section]
\newtheorem{lemma}[theorem]{Lemma}
\newtheorem{corollary}[theorem]{Corollary}
\newtheorem{observation}[theorem]{Observation}
\newtheorem{definition}[theorem]{Definition}
\begin{document}

\title{The structure of graphs with no $K_{3,3}$ immersion}

\author{
   Matt DeVos\thanks{Department of Mathematics, Simon Fraser University, Burnaby, B.C., Canada V5A 1S6, mdevos@sfu.ca.
     Supported in part by an NSERC Discovery Grant (Canada).}
\and
   Mahdieh Malekian\thanks{Department of Mathematics, Simon Fraser University, Burnaby, B.C., Canada V5A 1S6, mahdieh\_malekian@sfu.ca}
}

\date{}

\maketitle

\begin{abstract}
The Kuratowski-Wagner Theorem asserts that a graph is planar if and only if it does not have either $K_{3,3}$ or $K_5$ as a minor.  Using this Wagner obtained a precise description of all graphs with no $K_{3,3}$ minor and all graphs with no $K_5$ minor.  Similar results have been achieved for the class of graphs with no $H$-minor for a number of small graphs $H$.  

In this paper we give a precise structure theorem for graphs which do not contain $K_{3,3}$ as an immersion. This strengthens an earlier theorem of Giannopoulou, Kami\'{n}ski, and Thilikos \cite{MR3283573} that gives a rough description of the class of graphs with no $K_{3,3}$ or $K_5$ immersion.
\end{abstract}


\section{Introduction} 
\label{sec-intro}
In this paper, we will consider finite undirected graphs which may have parallel edges, but no loops (they contribute nothing to the theory for the graphs of interest).  
Graph minors is a well-established part of structural graph theory. One of the first prominent theorems in this area is the Kuratowski-Wagner Theorem, which characterizes planar graphs.
\begin{theorem}[Kuratowski \cite{Kuratowski1930}; Wagner \cite{MR1513158}]
	A graph is planar if and only if it does not contain $K_{3,3}$ or $K_5$ as a minor.
\end{theorem}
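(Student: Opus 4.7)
The plan is to handle the easy direction first: since $K_{3,3}$ and $K_5$ are non-planar (verified directly via Euler's formula $n - m + f = 2$, which rules out a simple planar graph with these many edges on $6$ or $5$ vertices), and since planarity is preserved under edge deletion, vertex deletion, and edge contraction, any graph with a planar drawing has only planar minors. Hence a planar graph contains neither $K_{3,3}$ nor $K_5$ as a minor. The substance is the converse, which I would attack by contradiction: let $G$ be a non-planar graph with no $K_{3,3}$ or $K_5$ minor, chosen with $|V(G)| + |E(G)|$ minimum.

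The next step is to reduce to the $3$-connected case. If $G$ is disconnected or has a cut vertex, then each block is planar by minimality, and one obtains a planar embedding of $G$ by gluing at the cut vertex, contradicting the choice of $G$. If $G$ has a $2$-cut $\{u,v\}$, split $G$ along $\{u,v\}$ into two smaller graphs $G_1$ and $G_2$, adding the edge $uv$ to each; these still have no $K_{3,3}$ or $K_5$ minor (since any such minor in $G_i$ would pull back to one in $G$), so by minimality each is planar, and their planar embeddings can be pasted along the face containing $uv$ to planarly embed $G$. So we may assume $G$ is $3$-connected. Using Tutte's contraction lemma, if $|V(G)| \geq 5$ then $G$ has an edge $e = xy$ such that $G/e$ is also $3$-connected. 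Since $G/e$ is a minor of $G$ it has no $K_{3,3}$ or $K_5$ minor, and by minimality it is planar; moreover, by a theorem of Whitney, its planar embedding is unique up to reflection, which pins down a cyclic ordering of the neighbors of the contracted vertex $v_e$.

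The plan is then to split $v_e$ back into $x$ and $y$ and try to extend the embedding. The neighbors of $x$ and the neighbors of $y$ each form a set around $v_e$; if, in the cyclic order, the $x$-neighbors and $y$-neighbors form two contiguous arcs, one can embed $e$ between $x$ and $y$ inside the face created, and the extension succeeds. Otherwise the arcs interleave, and I would argue that $3$-connectivity forces enough structure among the interleaving neighbors to produce either a $K_5$ or a $K_{3,3}$ topological minor inside $G$. The main obstacle, and the technical heart of the proof, is precisely this case analysis: carefully enumerating the ways in which $x$-neighbors and $y$-neighbors can alternate around $v_e$, and for each obstruction to planar extension producing explicit branch sets realizing $K_{3,3}$ or $K_5$ as a minor of $G$. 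This contradicts the assumption on $G$, completing the proof.
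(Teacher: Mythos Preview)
The paper does not prove this theorem at all: it is quoted in the introduction as a classical result, with citations to Kuratowski and Wagner, purely to set the stage for the paper's own work on $K_{3,3}$ \emph{immersions}. There is therefore no ``paper's proof'' to compare your attempt against.

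For what it is worth, your outline is the standard Thomassen-style proof and is essentially correct. Two places deserve a little more care if you intend this as a full proof. First, in the $2$-separation reduction you assert that $G_i + uv$ still has no $K_5$ or $K_{3,3}$ minor; this needs a one-line justification (any use of the new edge $uv$ in a minor of $G_i + uv$ can be rerouted through a $u$--$v$ path in the other piece, yielding the same minor in $G$). Second, the final ``interleaving'' case analysis is the real content and should be spelled out: if $x$ and $y$ have three common neighbours on the facial cycle around $v_e$ you get a $K_5$ subdivision on $\{x,y\}$ together with those three vertices; otherwise the alternation of an $x$-only neighbour, a $y$-only neighbour, an $x$-only neighbour, a $y$-only neighbour around the cycle, together with the arcs of the cycle joining them, gives a $K_{3,3}$ subdivision. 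Since a topological minor is in particular a minor, either case contradicts your hypothesis on $G$.
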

Building upon this theorem, Wagner characterized the class of graphs which do not have $K_{3, 3}$ as a minor, and also those which do not contain $K_5$ as a minor \cite{MR1513158}. In addition to these, there are many structural theorems which characterize graphs without an $H$-minor for a fixed graph $H$. Such theorems exist, in particular, for when $H$ is one of  $W_5$, Prism, Octahedron, Cube, and $V_8$, see \cite{MR999699, MR0157370, MR3090713, MR1794690, MR3548299}.

In this paper we consider another graph containment relation, immersion. Let $G$ be a graph and let $xy,yz \in E(G)$ be a pair of distinct edges with a common neighbour. The pair of edges $xy, yz$ is said to \emph{split off at} $y$ if we delete these edges and add a new edge $xz$. If a graph isomorphic to $H$ can be obtained from $G$ by a sequence of splittings and edge and vertex deletions, then we say that $H$ is \emph{immersed} in $G$, or $G$ has an $H$ \emph{immersion}, denoted  $H \prec G$ or $G \succ H$. Equivalently, the graph $G$ has an $H$ immersion if and only if there exists a function $\phi$ with domain $V(H) \cup E(H)$ satisfying the following properties:
\begin{itemize}
	\item $\phi$ maps $V(H)$ injectively to $V(G)$
	\item $\phi$ assigns every $e = uv \in E(H)$ a path $\phi(e) \subseteq G$ with ends $\phi(u)$ and $\phi(v)$
	\item If $e,f \in E(H)$ are distinct, then $\phi(e)$ and $\phi(f)$ are edge-disjoint.
\end{itemize}

We call the vertices in $\phi( V(H) )$ \emph{terminals} of the immersion.  Let us comment that there is another variant of graph immersion, called strong immersion, which has the added restriction that every path $\phi(e)$ must be internally disjoint from the set of terminals.  The type of immersion considered here is also known as weak immersion, however since it is the only relation we consider we will just call it ``immersion''.

In sharp contrast to the setting of graph minors, there are very few results about the structure of graphs with certain particular forbidden immersions.  The most prominent of these is a theorem where both $K_{3,3}$ and $K_5$ are forbidden (in analogy with the Kuratowski-Wagner Theorem).  To state it we require a couple of standard definitions.  If $G = (V,E)$ is a graph and $X \subseteq V$ we let $\delta_G(X)$ denote the set of edges with exactly one endpoint in $X$ and we let $d_G(X) = | \delta_G(X)|$ (we drop these subscripts when the graph is clear from context).  We say that $G$ is $k$-\emph{edge-connected} \emph{(internally} $k$-\emph{edge-connected)} if $d(X) \ge k$ whenever $X, V \setminus X \neq \emptyset$ ($|X|, |V \setminus X| \ge 2$).  We postpone the definition of branch-width to the last section.  

\begin{theorem}[Giannopoulou, Kami\'{n}ski, and Thilikos \cite{MR3283573}]
\label{intro-k33-greek}
If $G$ is a $3$-edge-connected and internally $4$-edge-connected graph with no $K_5$ or $K_{3,3}$ immersion, then $G$ is either cubic and planar, or $G$ has branch-width at most $10$.
\end{theorem}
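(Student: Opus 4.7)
The plan is to split the argument on whether $G$ is cubic or has a vertex of degree at least $4$. The cubic case reduces to a minor problem handled by the Kuratowski--Wagner Theorem, while the non-cubic case is handled by a grid/wall theorem combined with edge-disjoint path routing supplied by the connectivity hypotheses.

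In the cubic case, I would first observe that no terminal of a $K_5$ immersion can exist in a cubic graph because each terminal needs degree at least $4$; so the $K_5$ hypothesis is automatic. Second, for a cubic host and a target of maximum degree at most $3$, weak immersion, topological minor and minor all coincide: in a cubic graph each non-terminal internal vertex can be used by at most one immersion path (since visiting a vertex internally consumes two of its three incident edges), forcing the paths to be internally vertex-disjoint, and a max-degree-$3$ minor is always realizable as a topological minor. Hence excluding $K_{3,3}$ as an immersion excludes it as a minor, and Wagner's decomposition of $K_{3,3}$-minor-free graphs (planar pieces and $K_5$ glued by $0,1,2$-sums), combined with the observation that $K_5$ survives no $2$-sum into a cubic graph, forces a $3$-edge-connected cubic $K_{3,3}$-minor-free graph to be planar.

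For the non-cubic case, I would fix a vertex $v$ of degree at least $4$ and assume toward a contradiction that the branch-width is at least $11$. The tool I would invoke is a grid/wall-type theorem: sufficiently large branch-width in $G$ forces a large wall as a minor, which, using the $3$-edge-connectivity and internal $4$-edge-connectivity together with Menger's theorem, can be upgraded to an immersion by locating edge-disjoint realizations of the wall edges (in high-edge-connectivity graphs a branch set in a minor model can be replaced by a web of edge-disjoint paths). The vertex $v$ then supplies the extra ``hub'' needed to promote the wall immersion into an immersion of $K_5$ or $K_{3,3}$: routing four edge-disjoint paths from $v$ into four corners of the wall and closing them up against wall branches yields one of the two forbidden immersions.

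The principal difficulty will be calibrating the bound $10$ precisely. Generic grid theorems give sharp thresholds only with substantial care, and here the statement demands matching the connectivity hypotheses to a wall of exactly the right size to guarantee a $K_5$ or $K_{3,3}$ immersion in every configuration of $v$ relative to the wall. I expect the bulk of the proof to consist of explicit edge-disjoint path-routing on walls of a fixed, small size, together with a case analysis on how the $3$- and $4$-edge-cuts separating $v$ from the wall interact with its structure; the value $10$ is almost certainly the outcome of such an optimization rather than an artifact of the high-level strategy.
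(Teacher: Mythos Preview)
This theorem is not proved in the present paper at all: it is quoted from \cite{MR3283573} as motivation, and the paper's contribution is a \emph{precise} structure theorem for $K_{3,3}$-immersion-free graphs (Theorem~\ref{k33-thm}), from which the branch-width statement follows as a corollary with the improved bound $3$ in place of $10$ (Corollary~\ref{k33-k5}). So there is no ``paper's own proof'' of this statement to compare against; the paper's route to the branch-width conclusion goes through the full classification, not through grid theorems.

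On the substance of your sketch: the cubic case is fine but over-engineered. You do not need Wagner's $K_{3,3}$-minor decomposition. In a cubic graph a $K_5$ subdivision is impossible for degree reasons, and a $K_{3,3}$ immersion is the same as a $K_{3,3}$ subdivision; so Kuratowski's theorem gives planarity directly. This is exactly how the paper handles the cubic case wherever it arises.

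The non-cubic case has a genuine gap. The step ``large branch-width $\Rightarrow$ large wall minor $\Rightarrow$ wall immersion via edge-disjoint paths from $3$-edge-connectivity and internal $4$-edge-connectivity'' is not a theorem you can invoke. Upgrading a minor model to an immersion generally requires much stronger edge-connectivity than $3$ or $4$ (and even then the known transfer results do not give what you want at this scale). Nor is it clear how four edge-disjoint paths from a single degree-$\ge 4$ vertex $v$ into a wall are to be converted into a $K_{3,3}$ or $K_5$ immersion: the wall is subcubic, so its vertices cannot serve as the remaining terminals of $K_5$, and producing three further degree-$\ge 3$ terminals with the required edge-disjoint connections is precisely the hard part. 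The actual proof in \cite{MR3283573} is a bespoke argument, and the present paper bypasses it entirely by proving the exact structure and reading off branch-width from the explicit families; your high-level plan does not yet supply the missing routing argument, and the specific constant $10$ cannot be recovered from what you have written.
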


This theorem is appealing since it splits the possible graphs $G$ satisfying the assumptions into two nice classes: cubic and planar, or branch-width at most 10.  However, this theorem is not a precise characterization since the latter class contains many graphs with $K_{3,3}$ or $K_5$ as an immersion.  Our main result is a precise structure theorem for the class of graphs with no $K_{3,3}$ immersion.  This immediately yields a precise structure for the class of graphs with no $K_{3,3}$ or $K_5$ immersion.  As a corollary we show that the bound on branch-width in the above theorem may be reduced to~3.

In the statement of Theorem \ref{intro-k33-greek} we have assumed that the graph $G$ is 3-edge-connected and internally 4-edge-connected.  However, this result can be meaningfully applied without any connectivity assumption on $G$.  Before explaining, let us insert a bit of terminology:  If $H$ is a graph and $X \subset V(H)$ then we let $H.X$ denote the graph obtained from $H$ by identifying all vertices in $X$ to a single new vertex (any loops created in this process are deleted).  Now consider an arbitrary graph $G$ for which we are interested in determining the presence of a $K_{3,3}$ immersion.  
Let $G'$ be obtained from $G$ by deleting every cut-edge and note that $K_{3,3} \prec G$ if and only if $K_{3,3} \prec G'$.  Next, consider a component $H$ of $G'$ with a set $X \subset V(H)$ satisfying $d(X) = 2$.  Form a new graph $H'$ ($H''$) from $H.X$ ($H.(V(H) \setminus X)$) by suppressing the newly created degree two vertex.  Then modify the graph $G'$ by removing the component $H$ and then adding the components $H'$ and $H''$.  If we repeat this modification until it can no longer be applied, the resulting graph $G''$ will again satisfy $K_{3,3} \prec G$ if and only if $K_{3,3} \prec G''$, but now every component of $G''$ is 3-edge-connected.  Finally, suppose that $H$ is a component of $G''$ with a set $X \subset V(H)$ satisfying $d(X) = 3$ and $|X|, |V(H) \setminus X| \ge 2$.  In this case we delete the component $H$ and add the components $H.X$ and $H.(V(H) \setminus X)$.  If this operation is repeated until no longer possible, the resulting graph $G'''$ will still satisfy $K_{3,3} \prec G$ if and only if $K_{3,3} \prec G'''$, however every component of $G'''$ will be 3-edge-connected and internally 4-edge-connected. 
A similar argument holds for $K_5$ too. 
 Accordingly, we will focus our attention on 3-edge-connected and internally 4-edge-connected graphs in the rest of the paper.

Before we can describe our structure theorem we require a few additional ingredients.  Namely we will need to introduce a certain reduction operation and a very tightly structured class of graphs which have small nested edge-cuts. We call a graph $H$ a \emph{doubled cycle} \emph{(doubled path)} if it can be obtained from a cycle (path) by adding a second copy of each edge.  

\begin{definition}
\normalfont
Let $G$ be a graph and let $X \subset V(G)$ satisfy $ |X| =k \ge 2$. We say $G[X]$ is a {\it chain of sausages of order} $k$ in $G$ if 
$G. (V(G)\setminus X )$ is a doubled cycle (of length $k+1$).  
If $G[X]$ is a chain of sausages of order at least 3 and $x,x' \in X$ are adjacent, then the operation of identifying $x$ and $x'$ to a new vertex is called a \emph{sausage shortening}.  If $G'$ is obtained from $G$ by repeatedly performing sausage shortenings until this operation is no longer possible, we call $G'$ a \emph{sausage reduction} of $G$.  We say that $G$ is \emph{sausage reduced} if it has no sausage of order at least 3 (so no sausage reduction is possible).  
\end{definition} 

\begin{figure}[htbp]
	\centering
	\includegraphics{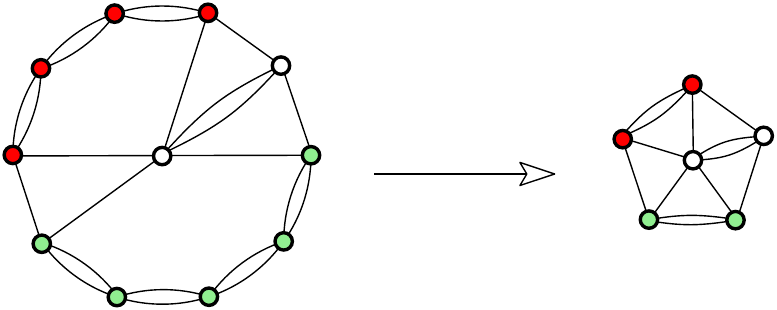}
	\caption{Sausage reduction}
	\label{fig:s-red}
\end{figure}

\begin{definition}
\normalfont
If $G$ is a graph and $X,Y \subseteq V(G)$ are disjoint, a \emph{segmentation} of $G$ \emph{relative to} $(X,Y)$ is a family of nested sets $X_1 \subset X_2 \subset X_3 \ldots \subset X_k$ satisfying:
\begin{itemize}
\item $X_1 = X$ and $V(G) \setminus X_k = Y$
\item $|X_{i+1} \setminus X_i| = 1$ for $1 \le i \le k-1$
\end{itemize}
We say that the segmentation has \emph{width} $k$ if $d(X_i) = k$ holds for every $1 \le i \le k$ and we call it an $(a,b)$-\emph{segmentation} if $|X| \le a$ and $|Y| \le b$.  
\end{definition}

	\begin{figure}[htbp]
		\centering
		\includegraphics{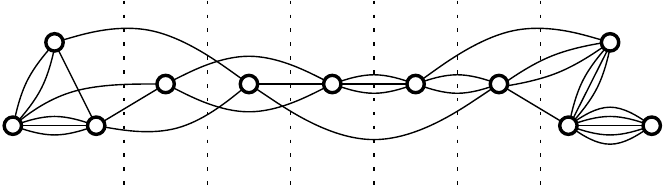}
		\caption{A graph with a $(3, 3)$-segmentation of width four}
		\label{fig:segmentation-ex}
	\end{figure}

Our full structure theorem features a number of sporadic small graphs that have no $K_{3,3}$ immersion and is stated in Section  \ref{sec-k33}.  For sausage reduced graphs on at least nine vertices our result may be stated as follows.

\begin{theorem}
	\label{intro-k33-thm}
	Let $G$  be a $3$-edge-connected and internally $4$-edge-connected graph which is sausage reduced.  If $|V(G)|\ge 9$, then $G$ does not immerse $K_{3, 3}$ if and only if
	\begin{itemize}
		\item $G$ is planar and cubic, or
		\item $G$ has a $(3, 3)$-segmentation of width four.
	\end{itemize}
\end{theorem}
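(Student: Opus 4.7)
The plan is to address the two directions separately. The backward direction, which I do first, is elementary; the forward direction is a specialization of the paper's finer structure theorem announced in Section~\ref{sec-k33}, and its proof is where the bulk of the work lies.

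For the backward direction I must show that neither of the two described graph classes admits a $K_{3,3}$ immersion. If $G$ is cubic and planar, any hypothetical immersion of $K_{3,3}$ into $G$ must use all three edges at each terminal (both $K_{3,3}$ and $G$ are $3$-regular there), and a counting argument shows that no non-terminal vertex of $G$ can be internal to two distinct immersion paths, since that would require four of its three edges to be used. Hence the paths are internally vertex-disjoint, so the immersion is actually a subdivision, yielding a $K_{3,3}$ minor (as $K_{3,3}$ is subcubic) and contradicting planarity. Next, suppose $G$ admits a $(3,3)$-segmentation $X_1 \subset \cdots \subset X_k$ of width four and consider a putative $K_{3,3}$ immersion in $G$. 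Let $f(i)$ be the number of its terminals in $X_i$; then $f(1) \le |X_1| \le 3$ and $f(k) \ge 6 - |V(G) \setminus X_k| \ge 3$. Since $f$ is non-decreasing with increments in $\{0,1\}$, some $i$ satisfies $f(i) = 3$. A short case analysis shows that every $3$--$3$ partition of the six vertices of $K_{3,3}$ has at least five crossing edges, so routing the nine edges of $K_{3,3}$ as edge-disjoint paths would force at least five paths through the four-edge cut $\delta(X_i)$---impossible.

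For the forward direction I would build a finer structure theorem covering all $3$-edge-connected, internally $4$-edge-connected graphs with no $K_{3,3}$ immersion, then use $|V(G)| \ge 9$ and sausage-reduced to rule out the sporadic small cases appearing in the unrestricted statement. A natural split is according to whether $G$ immerses $K_5$. If $G \not\succ K_5$, Theorem~\ref{intro-k33-greek} gives that either $G$ is planar cubic (done) or $G$ has branch-width at most $10$; in the latter I would examine a branch-decomposition and use the $3$- and internally $4$-edge-connectivity to dissect $G$ along $3$- and $4$-cuts, arguing that every ``branching'' in the resulting tree yields either a $K_{3,3}$ immersion, a sausage of order at least $3$ (forbidden by sausage-reduction), or a component on fewer than $9$ vertices. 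What survives is a linear chain of $4$-cuts, which is precisely the required $(3,3)$-segmentation. If instead $G \succ K_5$, then $G$ is not cubic (since $K_5$ has vertices of degree four); starting from a $K_5$ immersion of minimum total path length and using sausage-reduction, I would arrange all of $V(G)$ into a linear sequence of $4$-cuts, again producing the $(3,3)$-segmentation.

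The main obstacle is the forward direction: tightening Theorem~\ref{intro-k33-greek}'s branch-width-$10$ conclusion and handling the $K_5$-immersing case both demand a careful classification of $3$- and $4$-cut configurations under the connectivity and sausage-reduction hypotheses, and the hardest step is verifying that every non-linear branching in a branch-decomposition must already contain a $K_{3,3}$ immersion. Sausage-reduction is the key lever preventing long doubled-edge chains from spawning exceptional subcases, and the $|V(G)| \ge 9$ hypothesis is what lets us avoid naming the small sporadic graphs that would otherwise appear in the structure theorem.
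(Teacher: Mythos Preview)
Your backward direction is correct and essentially matches the paper's argument (Observation~\ref{containk33obs}): cubic planar graphs cannot immerse $K_{3,3}$ because an immersion would be a subdivision, and a width-four segmentation blocks $K_{3,3}$ because every $3$-subset of $V(K_{3,3})$ has edge-boundary at least five.

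Your forward direction, however, is only a plan, and the plan has a genuine gap. The central step---going from ``branch-width at most $10$'' to ``$(3,3)$-segmentation of width four''---does not work as described. Branch-width bounds the order of \emph{vertex} separations in a branch-decomposition, not the size of edge-cuts, so the tree you obtain does not directly give you $4$-edge-cuts to chain together. Your assertion that every branching node of a branch-decomposition must either produce a $K_{3,3}$ immersion, a long sausage, or a small component is exactly the heart of the matter, and nothing you have said justifies it; this is not a technicality but the whole difficulty. The $K_5$-immersing case is similarly unsupported: ``arrange all of $V(G)$ into a linear sequence of $4$-cuts starting from a minimum-length $K_5$ immersion'' is a hope, not an argument.

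The paper takes an entirely different route and does not invoke Theorem~\ref{intro-k33-greek} at all. It proves a finer structure theorem (Theorem~\ref{k33-thm}) listing five explicit families, then observes that under sausage-reduction and $|V(G)|\ge 9$ only types~0 and~1 survive. The engine behind Theorem~\ref{k33-thm} is a pair of \emph{rooted} immersion results for small graphs: a structure theorem for graphs with no rooted $W_4$ immersion (from \cite{devosforbidden1}) and one for graphs with no rooted Eyeglasses immersion (Theorem~\ref{eye-thm}, proved via Thomassen's $2$-linkage theorem). These are applied on either side of a $4$- or $5$-edge-cut in a minimum counterexample (Lemmas~\ref{w4-eye-k33}, \ref{no-deep-4ecut}, \ref{no-deep-5ecut}) to force the cut to be shallow, after which a computer check on graphs with at most nine vertices (Lemma~\ref{k33-code}) handles the base cases and discovers the sporadic exceptions. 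None of this machinery appears in your proposal.
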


Note that planar graphs and graphs with a $(3,3)$-segmentation of width four cannot contain a $K_5$ immersion.  Accordingly, the outcome of this theorem remains unchanged if we forbid both $K_{3,3}$ and $K_5$ as immersions.  

Our proof of the structure theorem for graphs with no $K_{3,3}$ immersion requires two theorems concerning rooted immersions of smaller graphs.  One of these is established in an earlier paper 
\cite{devosforbidden1}, the other appears in the second section of this paper.  In addition we have called upon a computer to establish the result on all graphs with at most nine vertices.  This has the great advantage of letting the computer discover all of the sporadic exceptional graphs and permitting us a clean proof for larger graphs which need not even encounter these small obstructions.  

The rest of this paper is organized as follows: In Section \ref{sec-eye}, we will introduce and see a result about the immersion of a certain rooted graph, called Eyeglasses, and in Section \ref{sec-k33} we will state and prove our main result on the structure of graphs excluding $K_{3,3}$ as immersion.

\section{Forbidding Eyeglasses}
\label{sec-eye}

Precise structural theorems for small rooted graphs can be extremely useful tools in finding immersions of somewhat bigger graphs. They enable us to break a bigger graph into smaller rooted pieces and then tie together the immersions of these smaller pieces to obtain an immersion of the desired bigger graph.  The purpose of this section is to establish a precise structure theorem for graphs that do not contain a certain rooted graph called \emph{Eyeglasses} as an immersion.  We begin with some terminology.  

A \emph{rooted graph} consists of a connected graph $G$ together with an ordered tuple $(x_1, \ldots, x_k)$ of distinct vertices called \emph{roots}.  If $H$ together with $(y_1, \ldots, y_k)$ is another rooted graph, we say $G$ contains $H$ as a {\it rooted immersion} if there is a sequence of splits and edge-deletions which transforms $G$ into a graph isomorphic to $H$, where this isomorphism sends $x_i$ to $y_i$, for $i=1,\ldots, k$.  We will write $(G; x_1, \ldots ,x_k) \succ_r (H; y_1, \ldots , y_k)$ to indicate that $G$ has a rooted immersion of $H$. For clarity, in our figures the roots are always solid while other vertices are open.  We define Eyeglasses to be a rooted graph obtained from a path of length three in which the edges incident with the two ends of the path are doubled, and the two ends of the paths are the roots, see Figure \ref{fig:eyeglasses}.
\begin{figure}[htbp]
	\centering
	\includegraphics{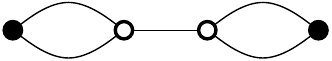}\\
	\caption{Eyeglasses}
	\label{fig:eyeglasses}
\end{figure}

Next we introduce three classes of graphs which do not immerse Eyeglasses.  For a vertex $v$ in a graph $G$, we let $d(v)$ denote the degree of $v$ and let $N(v)$ denote the set of vertices adjacent to $v$.  
\begin{definition} 
	\normalfont
	Let $G = (V, E)$ together with $(x_0,x_1)$ be a rooted graph and assume $d(x_0) = d(x_1) = 2$.  We say that $G$ is 
\end{definition}
\begin{description}
	\item [Type 1] if $d(v) = 3$ for every $v \in V \setminus \{x_0,x_1\}$, each root has two neighbours, say $N(x_i) = \{s_i, t_i\}$, where $s_0,s_1,t_0,t_1$ are distinct, and there is a planar embedding of $G\setminus  \{x_0, x_1\}$ in which $s_0, s_1, t_0, t_1$ appear on the boundary of the outer face, in this cyclic order (see Figure \ref{fig:eyeglassesobs1}).
	\item [Type 2] if there exist two distinct vertices $u, v\in V \setminus \{x_0, x_1\}$ such that $\{x_0, x_1\}= N(u)= N(v)$, and $G \setminus \{x_0, x_1\}$ is a $u-v$-doubled path (see Figure \ref{fig:eyeglassesobs2}).
	\item [Type 3] if $G$ is isomorphic to the graph obtained from $K_4$ by deleting an edge.  (see Figure \ref{fig:eyeglassesobs3}).
	\end{description}
\begin{figure}[htbp]
	\centering
	\begin{subfigure}[b]{0.35\textwidth}
		\centering
		\includegraphics{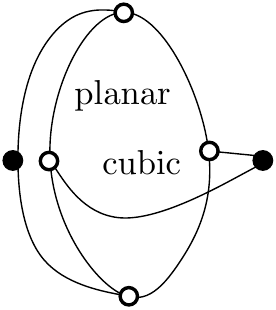}
		\caption{Type 1}
		\label{fig:eyeglassesobs1}
	\end{subfigure}
	\hfill
	\begin{subfigure}[b]{0.35\textwidth}
		\centering
		\includegraphics{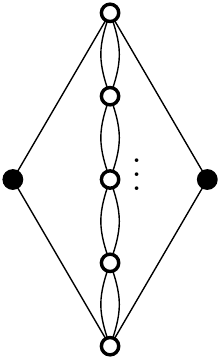}
		\caption{Type 2}
		\label{fig:eyeglassesobs2}
	\end{subfigure}
	\hfill
	\begin{subfigure}[b]{0.25\textwidth}
		\centering
		\includegraphics{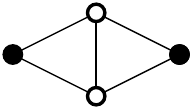}
		\caption{Type 3}
		\label{fig:eyeglassesobs3}
	\end{subfigure}
	\caption{Obstructions to an immersion of Eyeglasses}
	\label{fig:types}
\end{figure}

Type 1 is an especially interesting obstruction since a topological condition appears here.  In a graph $G$ of this type, every vertex has degree at most three, so any immersion of Eyeglasses would require $G$ to have two vertex disjoint cycles $C_0, C_1$ with $x_i \in V(C_i)$ for $i=0,1$.  However the above topological condition implies that no such cycles can exist.  To handle this case we will need to call on a characterization of graphs without a 2-linkage.  This theorem has appeared by several authors in varying forms (see also \cite{MR560773}), we will use that given by Thomassen.

\begin{theorem}[Thomassen \cite{MR595938}]
	\label{tpp}
	Let $G$ be a graph and let $T = \{s_0, t_0, s_1, t_1\} \subseteq V(G)$.  Assume that for every $X \subseteq ( V(G) \setminus T)$ with $|X| = 3$ the graph $G \setminus X$ has at most one component disjoint from $T$, and if such a component exists it is an isolated vertex.  Then one of the following holds:
\begin{itemize}
\item There exist vertex disjoint paths $P_0, P_1$ in $G$ so that $P_i$ has ends $s_i, t_i$ for $i=0,1$.
\item There is a planar embedding of $G$ in which $s_0, s_1, t_0, t_1$ appear on the boundary of the infinite face in this cyclic order.
	\end{itemize}
\end{theorem}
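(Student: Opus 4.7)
The plan is to argue by induction on $|V(G)|$, assuming no $2$-linkage exists between $(s_0,t_0)$ and $(s_1,t_1)$, and to produce a planar embedding with $s_0, s_1, t_0, t_1$ appearing on the outer face in the stated cyclic order.

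A first round of reductions eliminates degenerate local configurations. Suppress any non-terminal vertex of degree at most $2$; this preserves both the $3$-cut hypothesis (any small separator produced this way creates at most the one allowed isolated-vertex component) and each of the two possible conclusions. Next, whenever a non-terminal vertex $v$ has degree exactly $3$ with $N(v) = X$, the set $X$ is a $3$-cut isolating $\{v\}$, which the hypothesis permits; we may replace $v$ and its three incident edges by a triangle on $X$ without affecting either the existence of a $2$-linkage or the existence of a planar drawing of the required shape. After these reductions we may assume $G$ is simple, every non-terminal vertex has degree at least $4$, and no $3$-vertex subset of $V(G)\setminus T$ separates $T$ non-trivially.

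Now the inductive step. Pick a non-terminal vertex $v$ of degree at least $4$. For each pair of edges $e_1 = vu_1$, $e_2 = vu_2$ incident with $v$, form $G_{e_1,e_2}$ by deleting $e_1, e_2$ and adding the edge $u_1 u_2$. A $2$-linkage in $G_{e_1,e_2}$ lifts to one in $G$ by rerouting the new edge through $v$, so no such linkage exists in any $G_{e_1,e_2}$. An appropriate splitting-off argument in the spirit of Mader yields at least one pair for which the $3$-cut hypothesis is preserved; induction then supplies a planar embedding of $G_{e_1,e_2}$ with the four terminals on the outer face in the correct cyclic order. By varying the split-off pair at $v$ and comparing the resulting embeddings, we recover a planar embedding of $G$ itself with the same property on the outer face.

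The main obstacle is this final gluing: planar embeddings coming from different splittings at $v$ need not a priori agree on the rotations at the other vertices, and the argument must show topologically that they fit together into a global planar embedding of $G$. This is where the $3$-cut hypothesis is used in full strength, to exclude the unruly configurations in which the local embeddings could disagree. Once planarity is established, the cyclic order of $s_0, s_1, t_0, t_1$ on the outer face follows from the absence of a $2$-linkage: any other cyclic order would admit two vertex-disjoint paths routed around the disk, contradicting our standing assumption.
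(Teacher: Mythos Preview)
The paper does not prove this theorem. Theorem~\ref{tpp} is quoted from Thomassen's paper and used as a black box inside the proof of Theorem~\ref{eye-thm}; no argument for it appears anywhere in the text. There is therefore nothing in the paper to compare your proposal against.

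Regarding the proposal on its own merits: you yourself flag the gap. After the splitting-off at $v$, induction hands you a planar embedding of each $G_{e_1,e_2}$, but you do not actually explain how to assemble these into a planar embedding of $G$; the sentence ``the argument must show topologically that they fit together'' is a statement of what remains to be proved, not a proof. This step is genuinely the hard part, and it is not clear that a Mader-type splitting argument---which is native to edge-connectivity rather than vertex-connectivity---can be made to preserve the $3$-cut hypothesis in the way you need. Thomassen's own proof, and the parallel arguments of Seymour and Shiloach, proceed differently: one first reduces to the situation where $G$ is (essentially) $4$-connected relative to the terminals, and then argues planarity directly by analysing bridges of a suitable cycle through the terminals, rather than by splitting off at a high-degree vertex. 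If you wish to supply a self-contained proof here, that route is more likely to close cleanly.
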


%

Our main result from this section is the following.

\begin{theorem}
	\label{eye-thm}
Let $G=(V,E)$ together with $(x_0,x_1)$ be a rooted graph and assume $|V| \ge 4$ and $d(x_0) = d(x_1) = 2$.  If
\begin{enumerate}[label=(\roman*)]
\item \label{eyes2c} $d(X) \ge 2$ whenever $X \subseteq V$ satisfies $x_0 \in X$ and $x_1 \not\in X$
\item \label{eye3ec} $d(Y) \ge 3$ whenever $Y \subseteq V$ satisfies $x_0,x_1 \not\in Y$ and $Y \neq \emptyset$
\item \label{eyei4c} $d(Y) \ge 4$ whenever $Y \subseteq V$ satisfies $x_0,x_1 \not\in Y$ and $|Y| \ge 2$
\end{enumerate}
%
%
Then $G$ has a rooted immersion of Eyeglasses if and only if $G$ is not of one of the types  $1, 2$, or $3$.
\end{theorem}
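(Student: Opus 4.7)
Because $d(x_0)=d(x_1)=2$, any cycle through $x_0$ uses both edges at $x_0$ and similarly for $x_1$; in particular, any rooted Eyeglasses immersion produces edge-disjoint cycles $C_0\ni x_0$ and $C_1\ni x_1$ with $C_0\not\ni x_1$ and $C_1\not\ni x_0$, plus an edge-disjoint path from some $y_1\in V(C_0)\setminus\{x_0\}$ to some $y_2\in V(C_1)\setminus\{x_1\}$ with $y_1\neq y_2$. Writing $N(x_0)=\{a_0,b_0\}$ and $N(x_1)=\{a_1,b_1\}$ as a multiset, finding Eyeglasses is thus equivalent to finding edge-disjoint paths $P_0$ from $a_0$ to $b_0$ and $P_1$ from $a_1$ to $b_1$ in $G\setminus\{x_0,x_1\}$, together with an edge-disjoint path joining $V(P_0)$ to $V(P_1)$. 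For the backward direction I would verify each type directly. In Type 1, the degree-three bound at every non-root forces $C_0$ and $C_1$ to be vertex-disjoint (a shared non-root vertex would require $\ge 4$ incident edges), so $P_0$ and $P_1$ realize a vertex-disjoint 2-linkage $(s_0,t_0),(s_1,t_1)$ in $G\setminus\{x_0,x_1\}$; this contradicts the planar embedding with the stated crossing cyclic order via Theorem \ref{tpp}. In Type 2, both cycles are forced to traverse the doubled $u$--$v$ path (any alternative would reuse edges at the opposite root), and a direct edge count shows $C_0\cup C_1$ exhausts $E(G)$, leaving no edges for the linking path. Type 3 is analogous: the unique edge of $G\setminus\{x_0,x_1\}$ would be needed by both cycles.

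\textbf{Hard direction: proof plan.} Assume $G$ satisfies (i)--(iii), has no Eyeglasses immersion, and is not of any of the three types; I would deduce a contradiction by induction on $|V(G)|+|E(G)|$. The main tool is Theorem \ref{tpp} applied to $G'=G\setminus\{x_0,x_1\}$ with terminals $\{a_0,b_0,a_1,b_1\}$, whose 3-separator hypothesis follows from (ii) and (iii) because any small separator in $G'$ lifts to a small cut in $G$. The dichotomy of Theorem \ref{tpp} yields two regimes. In the linkage regime one obtains vertex-disjoint paths $P_0,P_1$ with the prescribed endpoints, and I would then try to extract an edge-disjoint path in $G'\setminus(E(P_0)\cup E(P_1))$ from $V(P_0)$ to $V(P_1)$; condition (iii) should provide enough spare edges unless a very restricted configuration arises, which I would identify as Type 2 or Type 3. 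In the planar regime one obtains a plane embedding of $G'$ with the terminals in crossing cyclic order on the outer face; combined with (i)--(iii) this should pin $G$ down to Type 1. Before invoking Theorem \ref{tpp} I would first reduce around small cuts: a cut $d(Y)=3$ at a singleton non-root $Y$ (forced by (iii) to be a single vertex), or a cut $d(Y)=4$ with $|Y|\ge 2$, can be simplified by splitting or contracting to produce a smaller instance still satisfying (i)--(iii), to which the inductive hypothesis applies.

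\textbf{Principal obstacle.} The genuine difficulty lies in the residual-connectivity analysis in the linkage regime: certifying that, given a vertex-disjoint linkage $P_0,P_1$, the residual graph $G'\setminus(E(P_0)\cup E(P_1))$ contains no $V(P_0)$--$V(P_1)$ path only when $G$ is of Type 2 or Type 3. This requires a careful edge-count that exploits the ``$\ge 4$ edges leaving every non-singleton non-root set'' hypothesis to show that any component of the residual graph meeting $V(P_0)$ but not $V(P_1)$ is so constrained that its union with $\{x_0,x_1\}$ collapses to a doubled path (Type 2) or $K_4$-minus-an-edge (Type 3). A secondary difficulty is engineering the reduction steps so that the asymmetric hypotheses (i)--(iii) near the roots are preserved, since these are not quite uniform edge-connectivity conditions and standard tools such as Mader's splitting-off theorem must be applied with care around $x_0$ and $x_1$.
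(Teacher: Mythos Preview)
Your plan has a genuine gap in the place you least expect it, and the ``principal obstacle'' you identify is not one at all.

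First, the non-obstacle. In the linkage regime, once you have vertex-disjoint paths $P_0,P_1$ in $G'=G\setminus\{x_0,x_1\}$, the cycles $C_i=P_i+x_i$ are vertex-disjoint in $G$. Since $G$ is connected, take a shortest path $Q$ from $V(C_0)$ to $V(C_1)$; its interior avoids both cycles, so $E(Q)$ is disjoint from $E(C_0)\cup E(C_1)$, and its endpoints cannot be $x_0$ or $x_1$ because both edges at each root are already used. That is Eyeglasses immediately. The paper states exactly this in one line (``the connectivity of $G$ immediately implies\ldots''). So there is no residual-connectivity analysis to do, and Types~2 and~3 do not arise here --- they arise precisely when the four terminals $s_0,t_0,s_1,t_1$ are not distinct, a degenerate case you should handle separately (and which the paper dispatches with a single cut computation).

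Second, the actual gap. Your claim that the $3$-separator hypothesis of Theorem~\ref{tpp} ``follows from (ii) and (iii) because any small separator in $G'$ lifts to a small cut in $G$'' is false in general. A $3$-vertex separator $X$ in $G'$ can cut off a set $Y$ with $d_G(Y)$ arbitrarily large (parallel edges, high-degree vertices in $X$), so (iii) gives no contradiction. This implication only goes through when every non-root vertex has degree exactly~$3$, and that is exactly how the paper uses it: the proof splits on whether $G$ is subcubic away from the roots. In the subcubic case Thomassen's theorem applies and yields Type~1 (or Type~3 when neighbourhoods of the roots overlap). In the non-subcubic case the paper does something entirely different: it picks a vertex $v$ with $d(v)\ge 4$, uses Menger to route four edge-disjoint paths $P_1,P_2$ to $x_0$ and $P_3,P_4$ to $x_1$, and then performs a bridge analysis on $G-\bigcup E(P_i)$ (a sequence of eight short claims) that forces $G$ to be Type~2. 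Your inductive reductions around cuts of size $3$ or $4$ do not touch a lone high-degree vertex (a singleton $Y$ with $d(Y)\ge 4$ falls into neither of your reduction cases), so your plan never reaches a graph where Thomassen's hypothesis is available. This high-degree branch is where the real work is, and it is missing from your outline.
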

%
%

%

\begin{proof} 
It is straightforward to verify that the graphs of type 1, 2 and 3 do not have an immersion of Eyeglasses. For the reverse direction, suppose (for a contradiction) that $G=(V, E)$ is a counterexample to the theorem.  

First suppose that every $v \in V \setminus \{x_0, x_1\}$ satisfies $d(v) = 3$.  If there exists $y \in N(x_0) \cap N(x_1)$, then $d( \{x_0, x_1, y \}) \le 3$ and $G$ must be type 3 by \ref{eyei4c}.  If there exist two vertex disjoint paths $P_0, P_1$ so that $P_i$ has ends $N(x_i)$ for $i=0,1$ (here we permit the possibility that $|N(x_i)| = 1$ and $P_i$ is trivial), then the connectivity of $G$ immediately implies that $G$ has a rooted immersion of Eyeglasses.  In the remaining case, let $N(x_i) = \{s_i, t_i\}$ for $i=0,1$ and let $T = \{s_0, s_1, t_0, t_1\}$.  It follows from \ref{eye3ec} and \ref{eyei4c} (and the fact that $G$ is subcubic) that $G - \{x_0, x_1\}$ satisfies the hypothesis of Theorem \ref{tpp}, and therefore $G$ has type 1.  

By the above we may assume that $G$ contains a vertex $v$ with $d(v) = 4$.  Using \ref{eyes2c} and \ref{eyei4c} we may choose four edge-disjoint paths $P_1, P_2, P_3, P_4$ starting at $v$, so that $P_1, P_2$ end at $x_0$ and $P_3, P_4$ end at $x_1$. (To see that this is possible, add a new vertex $s$ and two edges between $s$ and $x_i$ for $i=0,1$, and then apply Menger's Theorem to find four edge-disjoint paths from $v$ to $s$).  We let ${\mathcal B}$ denote the set of all components of $G - \bigcup_{i=1}^4 E(P_i)$ with at least two vertices.  Every $B \in {\mathcal B}$ is called a \emph{bridge} and a vertex in $V(B) \cap \left(\bigcup_{i=1}^4 V(P_i) \right)$ is called an \emph{attachment}.  Let $V_i$ denote the set of internal vertices of $P_i$ for $1 \le i \le 4$ and note that the degree assumptions on $x_0, x_1$ imply that $x_0, x_1\notin \bigcup_i V_i$ and that $x_0$ and $x_1$ do not appear in any bridge.  Next we establish a sequence of properties of $G$.

\begin{enumerate}[label=(\arabic*), labelindent=0em ,labelwidth=0cm, parsep=6pt, leftmargin =7mm]

\item
\label{vxidentificationV1V2}
$V_1\cap V_2= \emptyset = V_3 \cap V_4$.
			
If the vertex $x$ were to be contained in either $V_1 \cap V_2$ or $V_3 \cap V_4$, then $P_1\cup\ldots \cup P_4 $ would have an immersion of Eyeglasses using the terminals $x_0, x_1, v, x$.   

\item There does not exist a bridge with a unique attachment.
\label{nosingleb}

Suppose (for a contradiction) that $B$ is a bridge with a unique attachment vertex $x$.  Choose $y \in V(B) \setminus \{x\}$ and note that by \ref{eye3ec} we may choose three edge-disjoint paths $Q_1, Q_2, Q_3$ from $x$ to $y$.  Either $x \in \cup_{i=1}^4 V_i$ or $x = v$. Note that by \ref{eyei4c}, in the former case $x$ must appear on at least two $V_i$'s, so we see that 
in either case $\big( \bigcup_{i=1}^4 P_i\big) \cup \big( \bigcup_{j=1}^3 Q_j \big)$ has a rooted immersion of Eyeglasses with terminals $x_0, x_1, x, y$, giving us a contradiction.  

\item The vertex $v$ is not an attachment of any bridge.
\label{novatb}

Suppose (for a contradiction) that $B \in {\mathcal B}$ has $v$ as an attachment, and note that by \ref{nosingleb} we may choose another attachment $x$ of $B$.  Now $B$ contains a path $Q$ from $x$ to $v$ and $Q \cup \big( \bigcup_{i=1}^4 P_i\big)$ has a rooted immersion of Eyeglasses with terminals $x_0, x_1, x, v$, giving us a contradiction.

\item There does not exist a bridge with an attachment in $V_i$ and an attachment in $V_j$ where $i \neq j$.
\label{ntrivialViVj}

Suppose (for a contradiction) that $B$ has attachment vertices in both $V_i$ and $V_j$ where $i \neq j$ and note that by \ref{nosingleb} we may choose distinct attachment vertices $x \in V_i$ and $y \in V_j$.  Now $B$ contains a path $Q$ from $x$ to $y$ and then $Q \cup \big( \bigcup_{i=1}^4 P_i\big)$ has a rooted immersion of Eyeglasses, giving us a contradiction.

\item 
\label{middleinV1}
Suppose $B \in {\mathcal B}$ has distinct attachments $w,w' \in V_j$.  If $P_j'$ is the subpath of $P_j$ from $w$ to $w'$, then $V(P'_j) \cap \left (\bigcup _{i \neq j}  V_i \right ) = \emptyset$.
			
Suppose (for a contradiction) that the above is violated and choose $x \in V(P_j')$ with $x \in V_i$ for some $i \neq j$.  
Let $ Q $ be a path in $B$ from $w$ to $w'$ and note that $x \not\in V(Q)$ (otherwise $B$ would contradict \ref{ntrivialViVj}).  Now using \ref{vxidentificationV1V2} we find that $P_1\cup\ldots \cup P_4\cup Q $ has a rooted immersion of Eyeglasses.

\item 
\label{nobridges}
${\mathcal B} = \emptyset$.

Suppose for a contradiction that ${\mathcal B} \neq \emptyset$ and suppose that $V_j$ contains attachment vertices of some bridge $B$.  Using \ref{middleinV1} we may define $P_j'$ to be the maximal subpath of $P_j$ with the property that $P_j'$ contains all attachments of $B$ and $P_j'$ does not contain any vertex in $\bigcup_{i \neq j} V(P_i)$ (so in particular $x_0, x_1, v \not\in V(P_j')$).  It now follows from \ref{middleinV1} that every bridge with an attachment in $P_j'$ has all attachments in $P_j'$.  However, now the subgraph $H$ consisting of $P_j'$ together with all bridges with attachments on this path has $d_G( V(H) ) = 2$, and this contradicts the edge-connectivity of $G$.
		
\item
\label{vxidentificationV1V3}
For every $1 \le i \le 4$ there is at most one $1 \le j \le 4$ with $i \neq j$ so that $V_i \cap V_j \neq \emptyset$.
		
Suppose (for a contradiction) that $x \in V_1\cap V_3$ and $y \in V_1\cap V_4$ (the other cases are similar). Note by \ref{vxidentificationV1V2} we have $ V_3\cap V_4=\emptyset $, and thus $ x \neq y $. Now $P_1\cup \ldots \cup P_4$ has a rooted immersion of Eyeglasses using the terminals $x_0, x_1, v$ and either $x$ or $y$, a contradiction.

\item
\label{vxidentificationV1V3order}
If $V_i \cap V_j$ is nonempty, then the order of appearing the vertices in $V_i\cap V_j$ on $P_i, P_j$ is the same.
		
We suppose $i=1$ and $j=3$, the other cases are similar.  If there exist $x, y \in V_1\cap V_3$ so that $x$ appears before $ y $ on $ P_1 $, but after $ y $ on $ P_3 $, then $P_1\cup\ldots \cup P_4 $ would contain an Eyeglasses immersion on $x_0, x_1, x, y$, a contradiction. 

\end{enumerate}

It follows from \ref{vxidentificationV1V2}, \ref{nobridges}, \ref{vxidentificationV1V3}, and \ref{vxidentificationV1V3order} that $G$ has type 2, and this final contradiction completes the proof.		
	\end{proof}
\section{Forbidding $K_{3,3}$}
\label{sec-k33}
In this section we state and prove a precise structure theorem for graphs which do not contain $K_{3,3}$.

\subsection{Statement of the main theorem}
\label{sub-k33-pre}
We start by introducing five families of graphs which do not immerse $K_{3,3}$.

\begin{description}
	\item [Type 0.] $G$ is type 0 if it is planar and cubic.
	\item [Type 1.] $G$ is type 1 if it has a $(3,3)$-segmentation of width four.
	
	\item [Type 2.] $G$ is type 2 if  there exist disjoint sets $W, W' \subseteq V(G)$ with $1 \le |W|, |W'| \le 2$ such that the graph $G^*$ obtained by identifying $W$ ($W'$) to a single vertex $w$ ($w'$) has a doubled cycle $C$ containing $w, w'$ satisfying one of the following:
	\begin{description}
		\item[(2A)] $w$ and $w'$ are not adjacent in $C$ and $G^* = C + ww'$ (see Fig. \ref{fig:k33-2A})
		\item[(2B)] $w$ and $w'$ have a common neighbour $v$ in $C$ and $G^* = C + wv + vw'$ (see Fig. \ref{fig:k33-2B})
		\item[(2C)] $w$ and $w'$ are adjacent in $C$ and $G^* = C + ww'$ (see Fig. \ref{fig:k33-2C})
	\end{description}
	\begin{figure}[htbp]
		\centering
		\begin{subfigure}[b]{0.3\textwidth}
			\centering
			\includegraphics{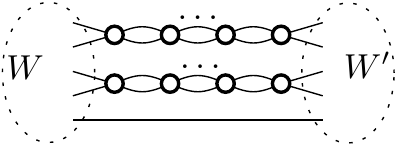}
			\caption{Type $2A$}
			\label{fig:k33-2A}
		\end{subfigure}
		\hfill
		\begin{subfigure}[b]{0.3\textwidth}
			\centering
			\includegraphics{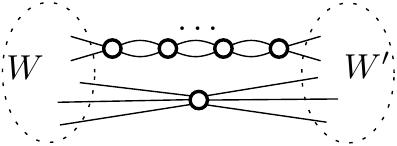}
			\caption{Type $2B$}
			\label{fig:k33-2B}
		\end{subfigure}
		\hfill
		\begin{subfigure}[b]{0.3\textwidth}
			\centering
			\includegraphics{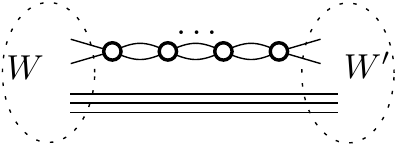}
			\caption{Type $2C$}
			\label{fig:k33-2C}
		\end{subfigure}
		\hfill
		\caption{Type 2 graphs}
		\label{fig:type2}
	\end{figure}
	\item [Type 3.] $G$ is type 3 if after sausage reduction it is isomorphic to one of the 20 graphs in Figure \ref{k33-exceptions-1}. That is $G$ is type 3 if it can be obtained from a graph in Figure \ref{k33-exceptions-1} by replacing  any pair of same-colored (not white) vertices with a chain of sausages of arbitrary order ($\ge 2$).
	
	\begin{figure}[htbp]
		\centering
		\includegraphics[height=5.2cm]{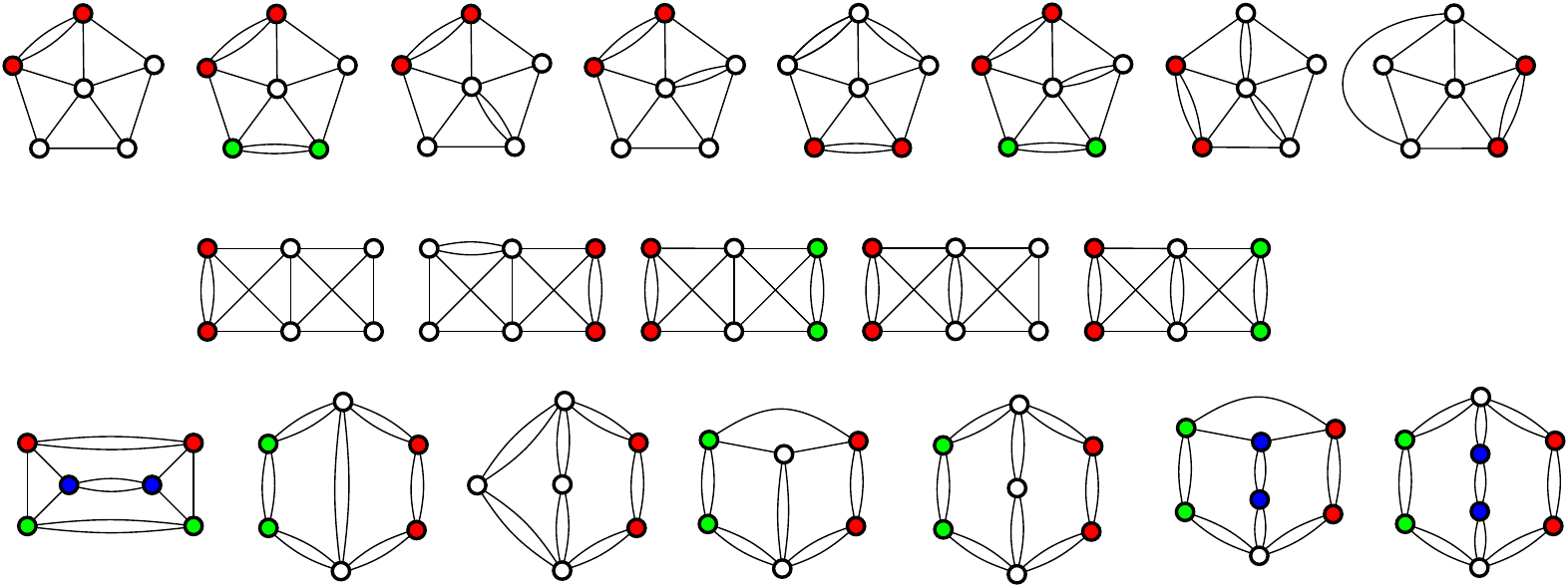}
		\caption{Graphs of type 3 after sausage reduction}
		\label{k33-exceptions-1}
	\end{figure}
	\item [Type 4.] $G$ is type 4 if it is isomorphic to one of the 14 graphs in Figure \ref{k33-exceptions-2}.
	\begin{figure}[htbp]
		\centering
		\includegraphics[height=3.1cm]{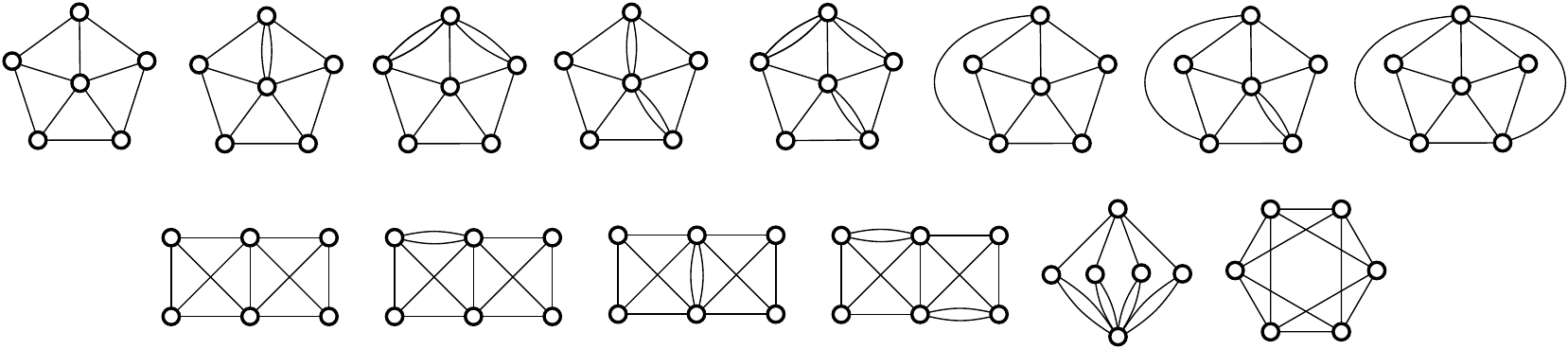}
		\caption{Type 4 graphs}
		\label{k33-exceptions-2}
	\end{figure}
\end{description}

With these definitions of types, we can now state our characterization of graphs with no $K_{3,3}$ immersion as follows:
\begin{theorem}
	\label {k33-thm}
	Let $G$ be a $3$-edge-connected, and internally $4$-edge-connected graph  with $|V(G)|\ge 6$. Then $G$ does not immerse $K_{3,3}$ iff $G$ is not one of the types $0, 1, 2, 3$, or $4$.
\end{theorem}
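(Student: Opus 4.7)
For the forward direction, I would verify that each of Types 0--4 fails to immerse $K_{3,3}$. A planar cubic graph (Type 0) cannot immerse $K_{3,3}$: since $K_{3,3}$ is cubic, any immersion yields a subdivision, contradicting planarity. For Type 1, the crucial observation is that an immersion of $K_{3,3}$ has $6$ terminals and $9$ edge-disjoint paths, so across any edge-cut $(X, V\setminus X)$ at most $d(X)$ of the $9$ edges of $K_{3,3}$ can have endpoints on opposite sides of $X$. In a $(3,3)$-segmentation of width four, the number of $K_{3,3}$-terminals in $X_i$ changes by $0$ or $1$ with $i$, starting from at most $|X_1|\le 3$ and ending at least $6-|V\setminus X_m|\ge 3$; hence it equals $3$ at some index $i^*$, and any $3$-$3$ split of the $K_{3,3}$-terminals forces at least $5$ crossing edges, exceeding the cut size $4$. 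Types 2 and 3 are parametrized families built up by chain-of-sausage extensions, while Type 4 is a finite list of $14$ graphs; each is verified directly using analogous edge-cut arguments applied to the distinguished small cuts in their definitions.

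For the reverse direction, I would argue by induction on $|V(G)|$. The base case $|V(G)|\le 9$ is handled by exhaustive computer enumeration of all $3$-edge-connected, internally $4$-edge-connected graphs of that size, which also identifies the sporadic exceptions appearing in Types 3 and 4. For the inductive step with $|V(G)|\ge 10$, I would first eliminate large sausages: if $G$ contains a chain of sausages of order at least $3$, a single sausage shortening produces a smaller graph $G'$ that still satisfies the connectivity hypotheses and still forbids $K_{3,3}$ as an immersion, so by induction $G'$ is of some Type 0--4; a case-by-case extension analysis then shows that re-inserting the shortened vertex places $G$ into the corresponding type (only Types 2 and 3 admit non-trivial sausage extensions, and the $20$ sporadic graphs in Type 3 are precisely the sausage-reduced representatives that can arise here).

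It remains to handle sausage-reduced $G$ with $|V(G)|\ge 10$, where the target is Type 0 or 1. If $G$ is cubic, then since $K_{3,3}$ is cubic, an immersion coincides with a topological minor and hence with a minor; Wagner's classification of $K_{3,3}$-minor-free graphs combined with the connectivity hypotheses forces $G$ to be planar, i.e., Type 0. If $G$ has a vertex of degree at least $4$, I would deploy the Eyeglasses Theorem (Theorem~\ref{eye-thm}) together with the rooted-immersion theorem from \cite{devosforbidden1}. The plan: for each non-trivial $4$-edge-cut $(X, V\setminus X)$, contract one side to a pair of degree-$2$ roots to obtain a rooted graph $G_X$, and observe that the $K_{3,3}$-immersion-freeness of $G$, combined with the structure on the contracted side, forces $G_X$ to avoid Eyeglasses. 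By Theorem~\ref{eye-thm} the graph $G_X$ then belongs to one of the three Eyeglasses-free types, which are so restrictive that iterating over a maximal nested family of $4$-cuts of $G$ delivers an ordered sequence $X_1 \subsetneq X_2 \subsetneq \cdots$ with $|X_{i+1}\setminus X_i|=1$ and $d(X_i)=4$, i.e., a $(3,3)$-segmentation of width four.

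The principal obstacle is this last gluing step: one must simultaneously choose the nested family of $4$-cuts and certify that the Eyeglasses-type obstructions, together with the rooted-immersion conclusions of \cite{devosforbidden1} on adjacent local pieces, are compatible, so that the $X_i$'s form a single chain (the required segmentation) rather than a branching tree of $4$-cuts. The computer-verified base case is essential here, since it clears away the small sporadic configurations that would otherwise complicate the reconciliation in the inductive step.
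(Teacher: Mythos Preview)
Your forward direction is essentially the paper's: planarity for Type~0, the $3$--$3$ split cut-count for Type~1, and direct/computer verification for Types~2--4. The reverse direction, however, diverges substantially from the paper's argument and has real gaps.

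The paper does \emph{not} attempt to build a $(3,3)$-segmentation directly from a nested family of $4$-cuts, as you propose. Instead, its strategy for a minimal counterexample $G$ is the opposite: it shows that $G$ has \emph{no} $4$-edge-cut with both sides of size $\ge 3$ (Lemma~\ref{no-deep-4ecut}) and \emph{no} $5$-edge-cut with both sides of size $\ge 4$ (Lemma~\ref{no-deep-5ecut}), and then reaches a contradiction by deleting a single edge, applying the theorem inductively to the resulting smaller graph, and checking that none of Types 0--4 can survive the edge reinsertion under these cut constraints. Your plan omits the $5$-edge-cut analysis entirely, and that step is indispensable: without it, when you delete an edge and the smaller graph turns out to be Type~1 or Type~2A, you cannot rule out that re-adding the edge leaves $G$ outside all five types.

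There is also a mechanical gap in your use of the Eyeglasses theorem. You write ``contract one side to a pair of degree-$2$ roots,'' but a $4$-edge-cut collapses to a single degree-$4$ vertex; to obtain two degree-$2$ roots you must \emph{choose} a pairing of the four cut edges. The paper's Lemma~\ref{w4-eye-k33} makes this choice via a rooted $W_4$ immersion on the \emph{other} side of the cut: the $W_4$ pairing $(e_1,e_3)$ versus $(e_2,e_4)$ is exactly what guarantees that an Eyeglasses immersion on the near side would combine with the $W_4$ to yield $K_{3,3}$. Without first securing $W_4$ on the far side (via Theorem~\ref{r-w4-thm-k33}), there is no canonical way to invoke Theorem~\ref{eye-thm}, and an arbitrary pairing need not give a contradiction. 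You cite the $W_4$ theorem but do not articulate this interaction, and your acknowledged ``principal obstacle'' (reconciling local pieces into a single chain of $4$-cuts) is precisely the problem the paper sidesteps by taking the no-deep-cuts route rather than the constructive one.
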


Suppose $G$ has one of the types $0, 1, 2, 3$, or $4$. Observe that if $G$ is sausage reduced and $|V(G)| \ge 9$, then $G$ is either type 0, or type 1. This shows that Theorem \ref{intro-k33-thm} is in fact a corollary of the above theorem. 

\subsection{Proof of the `if' direction}

We record a few simple properties before beginning the easy direction of the proof.

\begin{observation}
\label{containk33obs}
Suppose that $G$ is a graph that contains an immersion of $K_{3,3}$ with terminals~$T$.  Then we have
\begin{enumerate}
\item \label{sausageok} the graph obtained from $G$ by sausage reduction immerses $K_{3,3}$, and
\item \label{t1nok33} if $G$ has a segmentation $X_0 \subset X_1 \subset \ldots \subset X_k$ of width $4$ with $|X_0| \le 3$, then $|T \cap X_k| \le 2$, and
\item \label{k33imparity} if $v \in T$ has $d(v)$ even, then there is an edge $e$ incident with $v$ so that $G-e$ immerses $K_{3,3}$.
\end{enumerate}
\end{observation}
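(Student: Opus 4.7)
I would prove the three parts in the order (\ref{t1nok33}), (\ref{k33imparity}), (\ref{sausageok}), since (\ref{sausageok}) relies on (\ref{t1nok33}) while (\ref{k33imparity}) is independent.

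For part (\ref{t1nok33}) I induct on $i$ to show $|T \cap X_i| \le 2$ for every $0 \le i \le k$. In the base case suppose $|T \cap X_0| \ge 3$ and pick three terminals in $X_0$; these split across the $K_{3,3}$-bipartition as either $(3,0)$ or $(2,1)$, so the number of $K_{3,3}$-edges having exactly one endpoint in the chosen triple is $9$ or $5$ respectively. Each such $K_{3,3}$-edge corresponds to a $\phi$-path with endpoints on opposite sides of $\delta(X_0)$, so uses at least one edge of $\delta(X_0)$; edge-disjointness of the paths then forces $d(X_0) \ge 5$, contradicting $d(X_0)=4$. For the inductive step $|T \cap X_{i+1}| \le |T \cap X_i|+1 \le 3$, and if equality at $3$ occurs the identical cut argument applied to $X_{i+1}$ (whose boundary also has $4$ edges) produces the contradiction. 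Part (\ref{k33imparity}) is then a parity count: if $v = \phi(u)$ with $u$ of degree $3$ in $K_{3,3}$, then exactly three $\phi$-paths end at $v$ (each contributing one edge incident to $v$) while every other $\phi$-path either avoids $v$ or passes through it (contributing two). The total, $3+2m$, is odd, so evenness of $d_G(v)$ guarantees some incident edge $e$ is unused by $\phi$, and deleting $e$ leaves $\phi$ a valid $K_{3,3}$ immersion in $G-e$.

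For part (\ref{sausageok}) it suffices to prove that a single sausage shortening preserves a $K_{3,3}$ immersion. Let $\phi$ be an immersion in $G$ with terminals $T$, and let the relevant sausage be $X = \{x_1, \ldots, x_k\}$ of order $k \ge 3$. The natural width-$4$ segmentation $\{x_1\} \subset \{x_1, x_2\} \subset \cdots \subset X$ (with $|\{x_1\}|=1\le 3$) satisfies the hypothesis of part (\ref{t1nok33}), yielding $|T \cap X| \le 2$. Because $k \ge 3$, a short case analysis on the positions of the (at most two) terminals in $X$ produces an index $1 \le i < k$ with $|\{x_i, x_{i+1}\} \cap T| \le 1$; this is the pair at which we shorten. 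The crux is to modify $\phi$ so that neither of the two parallel edges between $x_i$ and $x_{i+1}$ is used: this uses the fact that $G.(V(G) \setminus X)$ is a doubled cycle, which supplies an alternative edge-disjoint route through $V(G) \setminus X$ for any $\phi$-path that would traverse the chosen pair. Once $\phi$ avoids both parallel edges, identifying $x_i$ with $x_{i+1}$ and discarding the resulting loops produces a valid $K_{3,3}$ immersion in the shortened graph.

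The main obstacle is the rerouting step in part (\ref{sausageok}): while the bound $|T \cap X| \le 2$ caps the through-traffic across the sausage at two paths, showing that both through-paths can be simultaneously diverted around $X$ without colliding with the rest of the immersion takes an explicit exchange argument leveraging the doubled-cycle structure of $G.(V(G)\setminus X)$, and this is the only non-routine piece of the observation.
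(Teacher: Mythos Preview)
Your arguments for parts (\ref{t1nok33}) and (\ref{k33imparity}) are correct and match the paper's intent (the paper's one-line proof just cites the fact that every $3$-subset of $V(K_{3,3})$ has edge-boundary at least $5$, and calls part (\ref{k33imparity}) immediate).

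For part (\ref{sausageok}), however, your rerouting plan does not work as stated. You propose to choose an adjacent pair $x_i, x_{i+1}$ with at most one terminal and then modify $\phi$ so that neither of the two parallel $x_i x_{i+1}$ edges is used. But if, say, $x_i$ is a terminal, then three $\phi$-paths end at $x_i$; since $d(x_i)=4$ and only two of those four edges go to vertices other than $x_{i+1}$, at least one of the three paths \emph{must} arrive at $x_i$ along an $x_i x_{i+1}$ edge. No detour around the doubled cycle can change this local degree count. So the step you flag as ``the only non-routine piece'' is in fact impossible in the one-terminal case.

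The repair is much simpler than what you attempt: you do not need to avoid the parallel edges at all. Just perform the identification directly. Identifying $x_i$ with $x_{i+1}$ sends each $\phi$-path to an edge-disjoint walk in the shortened graph (an $x_i x_{i+1}$ edge on a path becomes a loop and is simply dropped), and any walk that now revisits the merged vertex can be shortcut to a path using a subset of its edges. Edge-disjointness is preserved because distinct non-loop edges of $G$ map to distinct edges of the shortened graph. The only thing that can go wrong under identification is that two terminals collapse to one, and your bound $|\{x_i,x_{i+1}\}\cap T|\le 1$ (obtained from part (\ref{t1nok33}) plus pigeonhole, exactly as you set it up) is precisely what rules this out. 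This is the content behind the paper's terse remark that part (\ref{sausageok}), like part (\ref{t1nok33}), follows from the $3$-subset fact.
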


\begin{proof}
The first two parts follow from the fact that every set $X \subseteq V(K_{3,3})$ with $|X| = 3$ satisfies $d(X) \ge 5$.  The last is immediate from our definitions.
\end{proof}

\begin{proof}[Proof of the `if' direction of Theorem \ref{k33-thm}]
We will show that graphs of type $0, 1, 2, 3$ and 4 do not immerse $K_{3, 3}$.  First suppose $G$ is type 0. Since $G$ is cubic, it has a  $K_{3,3}$ immersion iff it has a $K_{3,3}$-subdivision.  However, this latter possibility is forbidden by planarity.  Graphs of type 1 cannot immerse $K_{3,3}$ by part \ref{t1nok33} of the previous observation.  For graphs of type 3, part \ref{sausageok} of the previous observation reduces our task to verifying that the graphs in Figure \ref{k33-exceptions-1} do not have $K_{3,3}$ immersions.  Similarly for graphs of type 4 we must verify that the graphs in Figure \ref{k33-exceptions-2} do not immerse $K_{3,3}$.  This verification can be done by hand, but we have used a computer.\footnote{The code is available on the arXiv.}

To finish the proof of the `if' direction we must prove that type 2 graphs do not immerse $K_{3,3}$.  Suppose (for a contradiction) that $G$ is type 2 relative to $W,W'$ and $G \succ K_{3,3}$, and let $T$ be the terminals of a $K_{3,3}$ immersion in $G$.  By part \ref{sausageok} of the previous observation, we may assume that $G$ is sausage reduced. Next suppose that $H$ is a chain of sausages of order two in $G$, say $V(H) = \{u,v\}$ with $u,v \not\in W \cup W'$ and that $u,v \in T$.  Part \ref{k33imparity} of the previous observation implies that $G$ will still immerse $K_{3,3}$ even after deleting either one copy of the edge $uv$ or both an edge incident with $u$ and one incident with $v$.  The latter possibility is impossible by the 3-edge-connectivity of $K_{3,3}$.  In the former case $G- uv$ has $K_{3,3}$ as an immersion, but this is also not possible since $G- uv$ is type 1.  Therefore at most one terminal of $K_{3,3}$ lies on any chain of sausages disjoint from $W \cup W'$.

So, if we let $G'$ be the graph obtained from $G$ by splitting off each chain of sausages (if existent) disjoint from $W \cup W'$ down to only one vertex, then $G' \succ K_{3, 3}$. This immediately gives a contradiction in the cases where either $G$ has type 2C, or $|W| =1$, or $|W'| =1$, as then $|V(G')| \le 5$.  In the remaining case $G$ has type 2A or 2B, and $|W|= |W'| =2$. Since $|V(G')| = 6$, every vertex of $G'$ must be a terminal in any immersion of $K_{3,3}$.  However, then part \ref{k33imparity} of the previous observation allows us to remove an edge incident with each vertex in $V(G') \setminus (W \cup W')$ while preserving a $K_{3,3}$ immersion.  The graph resulting from this operation would have an internal 3-edge-cut, and this contradiction shows that $G$ cannot immerse $K_{3,3}$.  
\end{proof}

\subsection{Four edge cuts}
\label{sub-k33-tool}

\label{k33-pf-start}
Our goal for this section is to show that a minimal counterexample to Theorem \ref{k33-thm} cannot have a 4-edge-cut with at least three vertices on each side.  The proof of this will call upon another structure theorem by the authors.  The graph $W_4$ is a simple graph obtained from a cycle of length 4 by adding a new vertex $u$ and an edge between $u$ and each existing vertex.  We turn $W_4$ into a rooted graph by declaring $u$ to be the root vertex.  

\begin{theorem}[DeVos, Malekian \cite{devosforbidden1}]
	\label{r-w4-thm-k33}
	Let $G$ be a graph with $|V(G)| \ge 5$ and with a root vertex $x$, where $d(x) \in \{4, 5\}$. If $G$ is $3$-edge-connected and internally $4$-edge-connected, then $G$ contains a rooted immersion of $W_4$ if and only if $G$ does not have one of the following types:
	\begin{description}
		\item [Type I. ] $G$ is type I if it has a $(2, 3)$-segmentation of width $4$ relative to $(X,Y)$ where $x \in X$.  
		\item [Type II.] $G$ is type II if there exists $W \subset V(G) \setminus \{x\}$ with $1 \le |W| \le 2$ such that the graph $G^*$ obtained by identifying $W$ to a single vertex $w$ has a doubled cycle $C$ containing $x, w$ which satisfies one of the following:
		\begin{description}
			\item [(II A)] $x$ and $w$ are not adjacent in $C$ and $G^* = C + xw$
			\item [(II B)] $x$ and $w$ have a common neighbour $v$ in $C$ and $G^* = C + xv + vw$
			\item [(II C)] $x$ and $w$ are adjacent in $C$ and $G^* = C + xw$. Moreover we have $|W| =2$.
		\end{description}
	\end{description}
\end{theorem}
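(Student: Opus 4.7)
The plan is to prove the two directions separately: the easy direction, that types I and II do not admit a rooted $W_4$ immersion, is by direct inspection; the hard direction, that the absence of such an immersion forces one of these types, is by induction on $|V(G)|$ via analysis of small edge-cuts.

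For the easy direction, observe that a rooted $W_4$ immersion requires 4 edge-disjoint paths from $x$ to 4 distinct rim terminals, together with 4 further edge-disjoint paths (all 8 collectively edge-disjoint) realising a 4-cycle on those terminals. For a type I graph, the $(2,3)$-segmentation supplies a nested family of 4-edge-cuts $\delta(X_i)$ with $x \in X_i$; a short edge-counting argument across a well-chosen $X_i$ shows that the 4 spokes and 4 rim edges cannot be routed through a cut of size only 4, since any placement of rim terminals on the two sides forces strictly more than 4 edges of the immersion to cross. For type II, the graph is a doubled cycle through $x$ and $w$ together with a specified extra edge or pair of edges, and a direct check shows that no 4 rim terminals can simultaneously realise both the star to $x$ and a 4-cycle among themselves, since a doubled cycle supports only two edge-disjoint paths between any pair of its vertices.

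For the hard direction, take a counterexample $G$ minimising $|V(G)|$ and split on the existence of a 4-edge-cut $\delta(S)$ with $|S|,|V(G)\setminus S|\ge 2$ and $x\notin S$. If no such cut exists, then $G$ is essentially 5-edge-connected away from $x$, and I would invoke Menger's theorem twice --- once to produce 4 edge-disjoint paths from $x$ to chosen endpoints, then to assemble a 4-cycle on those endpoints inside the graph with the first 4 paths removed --- to exhibit a rooted $W_4$ immersion directly, contradicting the assumption. If such a cut $\delta(S)$ does exist, form two smaller rooted graphs $G_1,G_2$ by contracting $V(G)\setminus S$ in one and $S$ in the other to a new root vertex, suppressing any resulting degree-$2$ vertex. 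The connectivity and root-degree hypotheses pass to each side, so induction applies; if either $G_i$ immerses rooted $W_4$, the immersion lifts to $G$ using 4 edge-disjoint paths across $\delta(S)$, again a contradiction.

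The remaining task is therefore to verify that in every one of the four pairings (I/I, I/II, II/I, II/II) for the two sides, $G$ itself must be of type I or II, contradicting the choice of counterexample. The main obstacle lies precisely here: combining a type-II doubled-cycle structure on one side with a type-I segmentation on the other requires careful tracking of which vertex of the doubled cycle plays the role of the contracted root, which edges of the doubled cycle or of the segmentation lie in $\delta(S)$, and how the resulting nested 4-edge-cuts in $G$ align across the cut. Small boundary cases (such as $|S|=2$ or $|S|=3$, or a side reducing to a tiny sporadic graph), cuts touching $N(x)$, and the bookkeeping required to keep $|X|\le 2$ and $|Y|\le 3$ in the resulting $(2,3)$-segmentation are the most delicate parts of the argument and where the bulk of the case work sits.
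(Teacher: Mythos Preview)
The paper does not prove this theorem; it is quoted from the companion paper \cite{devosforbidden1} and used here as a black box, so there is no proof in the present paper to compare against.

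Evaluating your sketch on its own merits, the decisive gap is the ``Menger twice'' step in the highly connected case. Menger gives you four edge-disjoint paths from $x$ to four endpoints, but it gives you no control over what remains once those paths are removed: there is no reason the leftover graph should contain a $4$-cycle through your chosen endpoints, edge-disjoint from the spokes. Arranging the spokes and the rim \emph{simultaneously} is the entire content of the theorem --- the Type I and Type II obstructions are precisely the configurations where this simultaneous routing fails despite the graph being $3$-edge-connected and internally $4$-edge-connected --- so an argument that ignores the obstructions in this case cannot be right.

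There are smaller problems as well. In the easy direction, your Type II claim that ``a doubled cycle supports only two edge-disjoint paths between any pair of its vertices'' is false (it supports four); the actual obstruction hinges on the placement of the extra edge(s) relative to $x$ and $w$. In the inductive step your root bookkeeping is muddled: on the side containing $x$ you must keep $x$ as root (not the contracted vertex), and you need to verify that contracting a side preserves internal $4$-edge-connectivity. The assertion that a rooted $W_4$ in either piece ``lifts to $G$'' across a $4$-edge-cut is not automatic either, since the four spokes must be routed through the four cut edges compatibly with the rim. Finally, the case analysis you defer to the end --- gluing a Type I side to a Type II side across the cut and showing $G$ itself is Type I or II --- is where the real work lives, and the sketch offers nothing beyond acknowledging that it is delicate.
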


Observe that conclusion of the above theorem can be strengthened somewhat under the added assumption $d(x) = 4$.  In this case $G$ cannot be type II.  Furthermore, if $G$ is type I, then (by possibly prepending the set $\{x\}$ to our segmentation) we find that $G$ has a $(1,3)$-segmentation relative to $( \{x\}, Y)$ for some $Y$.

\begin{corollary} 
\label{w4cor}
Let $G$ be a $3$-edge-connected and internally $4$-edge-connected graph with a root vertex $x$ and assume that $G \nsucc_r W_4$.   
\begin{enumerate} 
\item If $d(x) = 4$ then $G$ has a $(1,3)$-segmentation of width $4$ relative to $( \{x\}, Y)$ for some $Y$.
\item Suppose $d(x) = 5$ and $G$ does not have a vertex $v\notin N(x)$ satisfying $d(v) = 4$, and $|N(v)|<4$. Then $G$ is either type I or $|V(G)| \le 5$.  
\end{enumerate}
\end{corollary}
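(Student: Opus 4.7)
The plan in both parts is to invoke Theorem \ref{r-w4-thm-k33}: $G \nsucc_r W_4$ means $G$ is type I or type II, and then the extra hypothesis will either exclude type II or let us strengthen the type I conclusion.

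For Part 1, observe that in any subtype of type II the vertex $x$ sits on the doubled cycle $C$ in $G^*$ (giving it degree four already), and is incident to exactly one further edge: $xw$ in II A and II C, or $xv$ in II B. Since $x \notin W$ we have $d_G(x) = d_{G^*}(x) = 5$ in every subtype, so $d(x) = 4$ rules out type II and forces type I. Type I gives a $(2,3)$-segmentation of width four relative to some $(X,Y)$ with $x \in X$: if $|X|=1$ then $X = \{x\}$ and we are done, and if $|X|=2$ I simply prepend $\{x\}$ to the chain (the width-four condition at the new initial set is $d(\{x\}) = d(x) = 4$) to obtain the desired $(1,3)$-segmentation.

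For Part 2 I again assume $G$ is type II and produce a vertex forbidden by the hypothesis unless $|V(G)| \le 5$. Choose any $u \in V(C)$ that is distinct from $x$, from $w$, and (in II B) from $v$, and that is not a $C$-neighbour of $x$. Such a $u$ lies in $V(G) \setminus W$ and is not incident to any of the extra edges $xw$, $xv$, or $vw$, so $d_G(u) = d_{G^*}(u) = 4$. Its neighbourhood $N_{G^*}(u)$ consists of exactly its two $C$-neighbours; passing from $G^*$ back to $G$ replaces $w$ (if present) by at most $|W| \le 2$ preimages, so $|N_G(u)| \le \max(2, 1+|W|) \le 3$. The extra edges at $x$ contribute only $w$ (in II A and II C, explicitly excluded) or $v$ (in II B, already a $C$-neighbour of $x$ and therefore excluded), so $u \notin N_G(x)$. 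Hence such a $u$ violates the hypothesis of Part 2.

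It remains only to check when such a $u$ exists. The set $V(C) \setminus (\{x\} \cup N_C(x) \cup \{w\})$ is nonempty whenever $|V(C)| \ge 5$ in II A and II B (where $w \notin N_C(x)$) and whenever $|V(C)| \ge 4$ in II C (where $w \in N_C(x)$); in each residual short-cycle case $|V(G^*)| \le 4$, and since $|V(G)| = |V(G^*)| + |W| - 1$ with $|W| \le 2$ we conclude $|V(G)| \le 5$. The main obstacle is the bookkeeping around the identification $W \mapsto w$: one has to verify in each of II A, II B, and II C that neither the extra edges nor the possible multiple $W$-neighbours of $u$ accidentally push $|N_G(u)|$ above three or put $u$ into $N_G(x)$. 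Once that is settled, both parts follow directly from Theorem \ref{r-w4-thm-k33}.
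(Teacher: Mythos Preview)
Your approach is essentially identical to the paper's: invoke Theorem~\ref{r-w4-thm-k33}, observe that type~II forces $d(x)=5$ for Part~1, and for Part~2 check that a type~II graph satisfying the extra hypothesis is small. The paper compresses Part~2 into a single sentence (``any graph of type~II with such a property has at most~5 vertices''), whereas you carry out the case analysis explicitly; your bookkeeping there is correct.

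One small omission: Theorem~\ref{r-w4-thm-k33} requires $|V(G)|\ge 5$, and you invoke it without handling $|V(G)|\le 4$. For Part~2 this is harmless since $|V(G)|\le 5$ is one of the allowed conclusions, but for Part~1 you need to note separately that when $|V(G)|\le 4$ the one-term chain $X_1=\{x\}$ is already a $(1,3)$-segmentation of width four. The paper makes exactly this observation.
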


\begin{proof} For the first part, since type II graphs have root vertices of degree five, the previous theorem implies that $G$ has a $(2, 3)$-segmentation of width four, with $x$ in its head. Moreover, $d(x) =4$ implies that $G$ in fact has a $(1, 3)$-segmentation of width four; also observe that a $(1,3)$-segmentation trivially exists when $|V(G)| \le 4$.  The second part follows from the theorem and the observation that any graph of type II with such a property has at most $5$ vertices.  
\end{proof}

Next we take advantage of this $W_4$ theorem to establish a key lemma.  If $H$ is a graph and $X \subset V(H)$ then we will interpret $H.X$ as a rooted graph (when convenient) where it is assumed that the vertex created by identifying $X$ is the root.

\begin{lemma}
	\label{w4-eye-k33}
	Let $G = (V, E)$ be a $3$-edge-connected and internally $4$-edge-connected graph with $G \nsucc K_{3, 3}$. Let $X \subset V$ satisfy $|X|, |V \setminus X| \ge 3$ and $d(X) = 4$.  If the rooted graph $G.X$ has a rooted immersion of $W_4$, then one of the following occurs:
	\begin{enumerate}
		\item
		\label{w4-eye-k33-1}
		$G[X]$ is a chain of sausages, or
		\item 
		\label{w4-eye-k33-2}
		for every vertex $v \in X$ we have $d_G(v) = 3$, and $G. (V\setminus X)$ has a rooted immersion of $W_4$.
	\end{enumerate}
\end{lemma}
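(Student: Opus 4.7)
Here is my plan. Form the complementary rooted graph $H := G.(V \setminus X)$ with root $y$ equal to the image of $V \setminus X$. A routine cut-counting argument (translating subsets of $V(H)$ containing or not containing $y$ to subsets of $V(G)$) shows that $H$ inherits $3$-edge-connectivity and internal $4$-edge-connectivity from $G$, and $d_H(y) = d_G(V\setminus X) = 4$. I would then apply Corollary~\ref{w4cor}\,(1) to $H$ at the root $y$, which yields a case split exactly mirroring the dichotomy in the conclusion: either $H \succ_r W_4$, or $H$ has a $(1,3)$-segmentation of width $4$ relative to $(\{y\}, Y)$ for some $Y$.

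\emph{Case A} ($H \succ_r W_4$). Here the $W_4$ in $H = G.(V\setminus X)$ is precisely what conclusion~(2) demands, so it remains only to show $d_G(v) = 3$ for every $v \in X$. Suppose for contradiction that some $v \in X$ has $d_G(v) \ge 4$. My plan is to produce a $K_{3,3}$-immersion in $G$ by weaving together three families of edge-disjoint paths. First, $G.X \succ_r W_4$ supplies, in $G[V\setminus X]$ together with the four cut edges, four edge-disjoint spoke paths from the cut edges to distinct rim vertices $t_1,\ldots,t_4 \in V\setminus X$ plus a 4-cycle on the $t_i$'s. Second, $H \succ_r W_4$ supplies analogous spokes in $G[X]$ to distinct $s_1,\ldots,s_4 \in X$ with a 4-cycle on the $s_i$'s. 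Third, Menger's theorem in $G$ (using $d_G(v)\ge 4$ and the internal $4$-edge-connectivity) provides four edge-disjoint paths from $v$ to the four cut edges. Concatenating the $v$-to-cut paths with the $V\setminus X$ spokes yields a rooted $W_4$ in $G$ at $v$ with rim $t_1,\ldots,t_4$; the extra edge at $v$ (present since $d_G(v)\ge 4$), pushed through the $s$-cycle in $X$, supplies the final path promoting this $W_4$ into a $K_{3,3}$-immersion on six terminals chosen from $\{v,s_i,t_j,\ldots\}$, contradicting $G \nsucc K_{3,3}$.

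\emph{Case B} ($H$ has a $(1,3)$-segmentation $\{y\} = X_1 \subset \cdots \subset X_k$ of width $4$). The plan is to deduce that $H$ is a doubled cycle, giving conclusion~(1). For each added vertex $v = X_{i+1}\setminus X_i$ the identity
\[
d_H(X_{i+1}) = d_H(X_i) - d(v, X_i) + d(v, V(H) \setminus X_{i+1})
\]
combined with the width $4$ constraint forces $d(v, X_i) = d(v, V(H) \setminus X_{i+1})$, so $d_H(v)$ is even and, by $3$-edge-connectivity, at least $4$. Internal $4$-edge-connectivity of $G$ applied to short initial segments of the segmentation then pins each added vertex to degree exactly $4$ with a $(2,2)$-split, and forces both edges on each side to land on the same predecessor (resp.\ successor), giving a doubled edge -- otherwise one produces either an internal $3$-edge-cut in $G$ or, by combining extra edges with the spokes of the $W_4$ in $G.X$, a $K_{3,3}$-immersion in $G$. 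Hence $H$ is a doubled path from $y$. A final analysis of the head $Y$ (with $|Y|\le 3$) shows that the sole way to respect $3$-edge-connectivity, internal $4$-edge-connectivity, and $G.X \succ_r W_4$ simultaneously is for $Y$ to close the doubled path back to $y$ into a doubled cycle, yielding conclusion~(1).

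The main obstacle is Case~A: three independent edge-disjoint path systems must be simultaneously orchestrated into the nine edge-disjoint paths witnessing a $K_{3,3}$-immersion, and the bookkeeping for how the extra edge at $v$ bridges the two sides of the cut is delicate. A secondary hurdle is the rigidity analysis in Case~B, where configurations of $H$ compatible with the segmentation but not doubled-cycle-like must be eliminated using the combined strength of the connectivity hypotheses on $G$ and the given $W_4$ in $G.X$.
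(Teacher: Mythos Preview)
Your case split via Corollary~\ref{w4cor} applied to $H=G.(V\setminus X)$ is a natural first move, and the two target statements you isolate are in fact true. But both arguments you sketch have real gaps, and the missing ingredient in each is precisely what the paper's Eyeglasses theorem (Theorem~\ref{eye-thm}) supplies.

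\textbf{Case A.} Your construction of a $K_{3,3}$ immersion does not work as written. First, $W_4$ cannot be ``promoted'' to $K_{3,3}$ by adding one more path: the hub of $W_4$ has degree~$4$ while $K_{3,3}$ is cubic, so $W_4$ is not even a sub-immersion of $K_{3,3}$. Second, and more seriously, your three path families live in overlapping edge sets. The Menger paths from $v$ to the cut edges lie in $G[X]$, and so do the spokes and rim of the $W_4$ immersion in $H$; there is no reason these are edge-disjoint from one another. You would need to orchestrate a single edge-disjoint system inside $G[X]$ that simultaneously realises the Menger fan from $v$ \emph{and} whatever extra structure you need, and your sketch gives no mechanism for this. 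The paper sidesteps exactly this difficulty: it pairs the four cut edges as $(e_1,e_3)$ and $(e_2,e_4)$ according to \emph{opposite} rim vertices of the given $W_4$ in $G.X$, creates two degree-two roots $b,c$, and asks only whether $G^*=G''[X\cup\{b,c\}]$ immerses Eyeglasses. If it does, the Eyeglasses paths in $X$ and the $W_4$ paths in $V\setminus X$ are automatically edge-disjoint (they live on different sides of the cut), and they combine to a $K_{3,3}$ on $\{p,v_2,v_4\}$ versus $\{q,v_1,v_3\}$. The pairing of opposite rim vertices is the key idea you are missing.

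\textbf{Case B.} A $(1,3)$-segmentation of width~$4$ in $H$ does not by itself force $H$ to be a doubled cycle; plenty of Type~1 graphs are not doubled cycles. Your ``rigidity analysis'' defers the real work to the clause ``otherwise \dots combining extra edges with the spokes of the $W_4$ in $G.X$ gives a $K_{3,3}$-immersion,'' which is exactly the unproved step. To carry this out you would again need, inside $G[X]$, a controlled edge-disjoint structure that interacts correctly with the $W_4$ on the other side---and that is once more the content of the Eyeglasses theorem. In the paper's proof this case never arises as a separate analysis: if $G^*$ has no Eyeglasses immersion it is Type~1, Type~2, or Type~3 of Theorem~\ref{eye-thm}, and Type~2 \emph{is} the doubled-path structure that makes $G[X]$ a chain of sausages, while Type~1 directly gives the cubic-and-planar conclusion together with $G.(V\setminus X)\succ_r W_4$.

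In short, your dichotomy is correct but both branches require a structural result about $G[X]$ relative to a specific pairing of the cut edges, and that result is Theorem~\ref{eye-thm}. Without it the proof does not close.
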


\begin{proof}
Denote the root vertex of $G' = G. X$ by $a$.  Let the terminals of a rooted immersion of $W_4$ in $G'$ be $\{a, v_1, v_2, v_3, v_4 \}$, where there is an immersion of $C_4$ on $v_1 v_2 v_3 v_4 v_1$ in this cyclic order (see Figure \ref{fig:w4-eye-k33-Gp}). Let $P_{a v_i}$ be the path in $G'$ corresponding to the $av_i$ edge of $W_4$, and let $e_i= E (P_{av_i}) \cap \delta_{G'} (a)$, for $1 \le  i\le  4$. Now we define $G''$ to be the rooted graph obtained from $G$ by subdividing $e_1, e_3$ ($e_2, e_4$) with a new vertex, and then identifying the degree two vertices to a new vertex $b$ ($c$), see Figure \ref{fig:w4-eye-k33-Gz}. Let $G^* = G''[X \cup \{b, c\}]$ be a rooted graph with roots $(b, c)$.
	
\begin{figure}[htbp]
	\centering
	\begin{subfigure}[b]{0.49\textwidth}
		\centering
		\includegraphics[height=2.5cm]{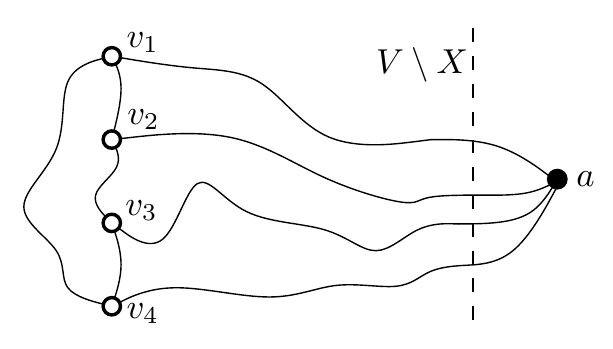}
		\caption{$G'\succ_r W_4$}
		\label{fig:w4-eye-k33-Gp}
	\end{subfigure}
	\hfill
	\begin{subfigure}[b]{0.49\textwidth}
		\centering
		\includegraphics[height=2.5cm]{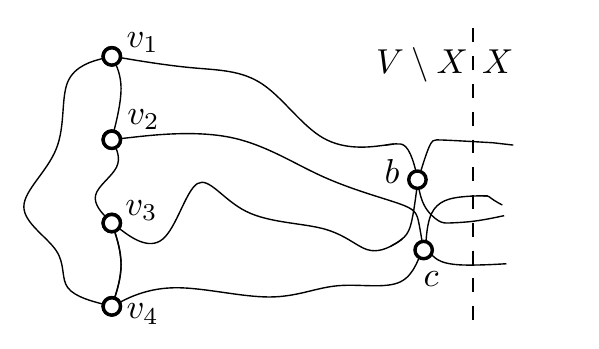}
		\caption{Graph $G''$}
		\label{fig:w4-eye-k33-Gz}
	\end{subfigure}
	\hfill
	\begin{subfigure}[b]{0.33\textwidth}
		\centering
		\includegraphics[height=2.5cm]{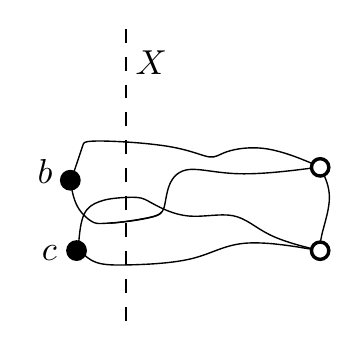}
		\caption{Immersion of Eyeglasses in $G^*$}
		\label{fig:w4-eye-k33-Gs}
	\end{subfigure}
	\hfill
	\begin{subfigure}[b]{0.66\textwidth}
	\centering
	\includegraphics[height=2.5cm]{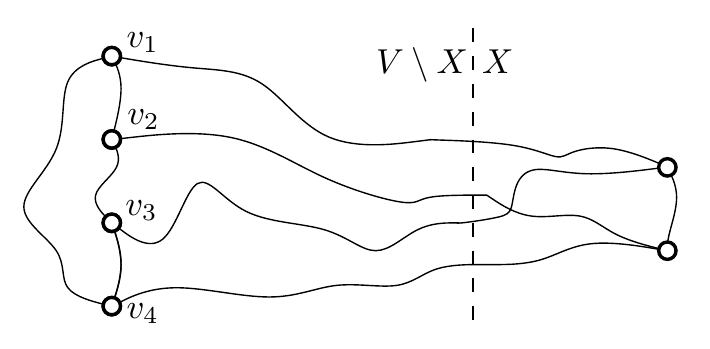}
	\caption{$G' \succ_r W_4$ and $G^* \succ_r$ Eyeglasses implies $G\succ K_{3,3}$}
	\label{fig:w4-eye-k33-G}
    \end{subfigure}
	\caption{}
\end{figure}	
	
	Observe that if there is a rooted immersion of Eyeglasses in $G^*$ (as in Fig. \ref{fig:w4-eye-k33-Gs}), then we have the contradiction $G\succ K_{3,3}$ (see Fig. \ref{fig:w4-eye-k33-G}). It follows from the internal 4-edge-connectivity of $G$ that $G[X]$ does not have a cut-edge. Therefore, by Theorem \ref{eye-thm} (and the observation $|V(G^*)| \ge 5$) we conclude that $G^*$ is either a type 1 or type 2 obstruction to immersion of Eyeglasses.  In the latter case $G[X]$ is a chain of sausages, as desired.  In the former case every $v \in X$ satisfies $d_G (v) = 3$ and $G[X]$ has a planar embedding for which all vertices incident with an edge of $\delta(X)$ are on the unbounded face. It follows from the 2-edge-connectivity of $G[X]$ and the assumption that $G[X]$ has maximum degree at most 3 that $G[X]$ is 2-connected.  Therefore, the unbounded face in our planar embedding of $G[X]$ is bounded by a cycle and we conclude that $G. (V\setminus X)$ has a rooted immersion of $W_4$ as desired.
\end{proof}

%

The next lemma is a helpful tool in proving Lemma \ref{no-deep-4ecut}.

\begin{lemma}
	\label {deep-4ecut}
	Let $ G = (V, E) $ be a counterexample to Theorem \ref{k33-thm}. 
	If $X \subset V$ satisfies $|X|, |V \setminus X| \ge 3$ and $d(X)=4$, then either $G[X]$ or $G[ V \setminus X]$ is a chain of sausages.
\end{lemma}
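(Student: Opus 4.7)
The plan is to consider, for the given 4-edge-cut $(X, V \setminus X)$, the two rooted graphs $G.X$ and $G.(V \setminus X)$, each rooted at its identified vertex (which has degree $4$), and to perform a case analysis based on whether these rooted graphs admit a rooted immersion of $W_4$. Throughout I assume for contradiction that neither $G[X]$ nor $G[V \setminus X]$ is a chain of sausages.

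First, suppose neither $G.X$ nor $G.(V \setminus X)$ immerses $W_4$. Then the first part of Corollary~\ref{w4cor} provides each of them with a $(1,3)$-segmentation of width $4$ based at the root. Pulling these two segmentations back into $G$ and concatenating them (after reversing one) yields a single nested chain $S_1 \subset S_2 \subset \cdots \subset S_N$ of subsets of $V(G)$, where $S_1$ lies in $V \setminus X$ with $|S_1| \le 3$, $V(G) \setminus S_N$ lies in $X$ with $|V(G) \setminus S_N| \le 3$, each step grows by exactly one vertex, and every cut $d(S_i) = 4$. Hence $G$ admits a $(3,3)$-segmentation of width $4$ and so is of type~$1$, contradicting our choice of $G$.

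Second, suppose exactly one of the two rooted graphs, say $G.X$, admits a rooted $W_4$ immersion. Lemma~\ref{w4-eye-k33} applied with $X$ then says either $G[X]$ is a chain of sausages (contradicting our assumption) or $G.(V \setminus X) \succ_r W_4$ (contradicting the hypothesis of this sub-case).

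The remaining case, where both $G.X$ and $G.(V \setminus X)$ admit rooted $W_4$ immersions, is the crux. Applying Lemma~\ref{w4-eye-k33} to each side together with the standing assumption that neither $G[X]$ nor $G[V \setminus X]$ is a chain of sausages forces every vertex of $G$ to have degree $3$, so $G$ is cubic. A closer look at the proof of Lemma~\ref{w4-eye-k33} shows that in this situation both $G[X]$ and $G[V \setminus X]$ are planar, with the four endpoints of the cut edges appearing on the outer face of each piece in specific cyclic orders. My plan at this point is to compare these cyclic orders. If they are compatible for a planar gluing along the four cut edges, then $G$ itself is planar cubic, hence of type~$0$, a contradiction. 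Otherwise I would redo the auxiliary-graph construction used in the proof of Lemma~\ref{w4-eye-k33} with a pairing of the four cut edges that is not consistent with the cyclic order on the outer face of $G[X]$, producing a new $G^*$ which cannot be an Eyeglasses-obstruction of type~$1$ (the alternating condition on the outer face fails), of type~$2$ (the cubic-ness of $G$ combined with $|X| \ge 3$ precludes the required doubled-path structure), or of type~$3$ (as $|V(G^*)| \ge 5$). Theorem~\ref{eye-thm} then forces this $G^*$ to contain an Eyeglasses immersion, which combined with the rooted $W_4$ immersion in $G.(V \setminus X)$ yields a $K_{3,3}$ immersion in $G$ just as in Lemma~\ref{w4-eye-k33}, contradicting $G$ being a counterexample. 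The main obstacle is to make this final sub-case rigorous: pinning down the precise combinatorial condition under which the two cyclic orders fail to be compatible, and verifying that for each such incompatibility a suitable alternative pairing exists and that the hypotheses of Theorem~\ref{eye-thm} remain satisfied by the modified $G^*$.
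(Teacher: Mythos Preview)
Your first two cases are correct and match the paper exactly. In the third case you also correctly deduce that $G$ is cubic by applying Lemma~\ref{w4-eye-k33} to both sides. But at that point you are done, and you have missed the one-line finish: in a cubic graph a $K_5$ subdivision is impossible (the branch vertices would need degree $\ge 4$), so by Kuratowski's Theorem a cubic graph either is planar or contains a $K_{3,3}$ subdivision, hence a $K_{3,3}$ immersion. Either outcome contradicts $G$ being a counterexample. This is precisely what the paper does.

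Your proposed workaround via cyclic orders and re-pairings is not only unnecessary but has genuine technical obstacles. The ``specific cyclic order'' you extract on the outer face of $G[X]$ is not intrinsic to $G[X]$: it depends on the pairing $\{e_1,e_3\}$, $\{e_2,e_4\}$, which in turn comes from a particular rooted $W_4$ immersion in $G.X$. To change the pairing you would need a different $W_4$ immersion in $G.X$ whose $4$-cycle places the edges in a different opposite-pair pattern, and there is no reason such an immersion exists. So the step ``redo the auxiliary-graph construction with a pairing not consistent with the cyclic order'' is not guaranteed to be available, and your acknowledged obstacle is real. Replace the entire last sub-case with the Kuratowski argument above.
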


\begin{proof} First suppose that neither of the rooted graphs $G.X$, $G.(V \setminus X)$ has a rooted immersion of $W_4$.  In this case Corollary \ref{w4cor} implies that both of these graphs have a $(1,3)$-segmentation of width 4 where the first set of the segmentation is the singleton consisting of the root vertex.  It follows that $G$ has type 1, a contradiction.

Therefore, we may assume without loss of generality that the rooted graph $G.X$ satisfies $G.X \succ_r W_4$ and apply Lemma \ref{w4-eye-k33}.  If $G[X]$ is a chain of sausages we have nothing left to prove.  Otherwise $G. (V \setminus X)$ has a rooted immersion of $W_4$ and every vertex in $X$ has degree three.  Now we may apply Lemma \ref{w4-eye-k33} again to deduce that either $G[V \setminus X]$ is a chain of sausages or that every vertex in $V \setminus X$ has degree 3.  In the former case our proof is finished.  In the latter case our original graph $G$ is cubic, but then Kuratowski's Theorem implies that either $G$ is planar and type 0 or has a $K_{3,3}$ immersion, so Theorem \ref{k33-thm} holds for $G$ (a contradiction).
\end{proof}

We are now ready to establish the main result from this subsection.

\begin{lemma}
	\label{no-deep-4ecut}
	If $ G = (V, E) $ is a counterexample to Theorem \ref{k33-thm} with $|V|$ minimum, then every $X \subset V$ with $|X|, |V \setminus X| \ge 3$ satisfies $d(X) \ge 5$.
\end{lemma}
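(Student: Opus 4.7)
The plan is to contradict the minimality of $G$ by producing a smaller counterexample to Theorem \ref{k33-thm}. Suppose for contradiction that $X \subset V$ satisfies $|X|, |V \setminus X| \ge 3$ and $d(X) = 4$. By Lemma \ref{deep-4ecut}, we may assume that $G[X]$ is a chain of sausages, of order $k = |X| \ge 3$. Let $G'$ denote the sausage reduction of $G$ obtained by shortening $G[X]$ down to order $2$, so $|V(G')| = |V \setminus X| + 2 < |V(G)|$ and $G'$ contains a chain of sausages $G'[X_s]$ of order $2$.

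First I would verify that $G'$ satisfies the hypotheses of Theorem \ref{k33-thm}. It is $3$-edge-connected and internally $4$-edge-connected because every cut of $G'$ lifts to a cut of $G$ in which $X$ lies on one side, and sausage shortening does not change the sizes of such cuts. Next, $G' \nsucc K_{3,3}$: one direction is Observation \ref{containk33obs}(\ref{sausageok}), and the converse is a lifting argument using the doubled edges along $X$ to route the paths of any $K_{3,3}$-immersion in $G'$ back through $X$ with the required edge-disjointness (the argument is routine once one checks the cases for how an immersion uses the shortened degree-$4$ vertices).

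Provided that $|V \setminus X| \ge 4$, so $|V(G')| \ge 6$, minimality forces $G'$ to have one of types $0$--$4$, and the contradiction follows by showing that $G$ itself must have one of these types. Type $0$ is ruled out immediately, since $G'[X_s]$ contains a vertex of degree $4$ and so $G'$ is not cubic. Type $3$ passes from $G'$ to $G$ directly, because type $3$ is defined via sausage reduction and $G'$ is the sausage reduction of $G$. For type $1$, I would extend the $(3,3)$-segmentation of width $4$ of $G'$ to one of $G$ by replacing the two vertices of $X_s$ with $x_1, \ldots, x_k$ in a careful insertion order; the doubled edges along $X$ keep each intermediate cut of size exactly $4$. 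Types $2$ and $4$ are handled similarly: the doubled-cycle-plus-chord structure (type $2$) or the sporadic structure (type $4$) is compatible with expanding $X_s$ back to $X$, yielding a $G$ of type $2$ or type $3$. The residual case $|V \setminus X| = 3$, where $|V(G')| = 5 < 6$, is handled by a direct enumeration of the $5$-vertex graphs with a chain of sausages of order $2$ meeting the connectivity hypotheses.

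I expect the main technical obstacle to be the type $1$ extension. In $G'$'s $(3,3)$-segmentation, the two vertices of $X_s$ are added at two specific steps, and the four boundary edges of $X_s$ attach to the preceding and succeeding sets in one of a few configurations. Choosing the correct order in which to insert $x_2, \ldots, x_{k-1}$ (and possibly reversing the segmentation) is what makes the width-$4$ claim go through. The type $2$ and type $4$ cases involve analogous but milder combinatorial bookkeeping.
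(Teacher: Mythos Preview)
Your approach is the same as the paper's at the top level---shorten sausages to get a smaller $G'$, invoke minimality, and then argue type-by-type that $G$ inherits a type. The substantive difference is that the paper takes $G'$ to be the \emph{full} sausage reduction of $G$, not just the shortening of the single chain $G[X]$. This one change eliminates precisely the case you flag as the main technical obstacle.

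With $G'$ fully sausage-reduced, the paper rules out type~1 for $G'$ outright rather than extending a segmentation. Any $4$-edge-cut $X'$ of $G'$ with both sides of size $\ge 3$ pulls back to such a cut in $G$; Lemma~\ref{deep-4ecut} forces one side in $G$ to be a chain of sausages, and after full reduction that side has size~$2$ in $G'$, a contradiction. Hence $G'$ has no $4$-edge-cut with both sides $\ge 3$, and since $|V(G')|\ge 6$ this is incompatible with a $(3,3)$-segmentation of width~$4$. No segmentation-extension argument is needed.

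Two smaller points. First, to get $G'\nsucc K_{3,3}$ you only need the trivial direction $G'\prec G$; Observation~\ref{containk33obs}(\ref{sausageok}) is the opposite implication and is not used here. Second, your treatment of type~4 (``expanding $X_s$ yields type~3'') is not right in general, but the case is vacuous: the graphs in Figure~\ref{k33-exceptions-2} contain no chain of sausages of order~$2$, whereas your $G'$ does, so $G'$ cannot be type~4 (or type~0) at all. The paper also dispatches the small case $|V(G')|\le 5$ by a short direct argument that $G$ is then type~1, rather than by enumeration.
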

\begin{proof}
Suppose (for a contradiction) that the lemma does not hold, and let $G'$ be the graph obtained by sausage reducing $G$.  First we consider the possibility that $|V(G')| \le 5$, and we will show this forces $G$ to be type 1---a contradiction.  If $G'$ is a doubled cycle, then $G$ has type 1.  Next suppose that $G'$ has a unique sausage of order 2, say $G[Y]$.  In this case $X = V(G') \setminus Y$ satisfies $|X| \le 3$ and $d_G(X) = 4$ and there is a $(3,1)$-segmentation of $G$ relative to $(X, \{y\})$ for some vertex $y$ so we again find that $G$ is type 1. The only remaining possibility is that $G'$ has two sausages $G[X], G[Y]$ of order 2 where $X \cap Y = \emptyset$.  In this case $G'$ has a $(2,2)$-segmentation of width 4 relative to $(X,Y)$ and it follows easily that $G$ is type 1.  

It follows from Lemma \ref{deep-4ecut} and our assumptions that $6 \le |V(G')| < |V(G)|$ so Theorem \ref{k33-thm} may be applied to $G'$. If $G' \succ K_{3, 3}$ then 
$G \succ G'$ implies $G \succ K_{3,3}$, which is a contradiction.  Therefore $G'$ has one of types 0, 1, 2, 3 or 4.  Since sausage reduction has been applied to $G$ nontrivially to obtain $G'$, the graph $G'$ has a chain of sausages of order 2.  It follows that $G'$ is not type 0 or 4.  Lemma \ref{deep-4ecut} implies that $G'$ does not have a set $X \subset V(G')$ with $d_{G'}(X)=4$ and $|X|, |V \setminus X| \ge 3$ and it follows that $G'$ is not type 1.  If $G'$ is type 3, then it is isomorphic to one of the 20 graphs in Figure \ref{k33-exceptions-1}, but then $G$ must also be type 3, a contradiction.  

In the remaining case $G'$ is type 2.  So to complete the proof, it will suffice to show that applying the opposite of a sausage reduction to an arbitrary graph $H$ of type 2 results in another graph of type 2.  Assume that $H$ is type 2 relative to $W,W'$ and let $H[Y]$ be a chain of sausages of order 2.  Note that $d_H(W) = d_H(W') = 5$ so we must have $|Y \cap W|, |Y \cap W'| \le 1$.  If $H$ is type 2A or 2B, then expanding $H[Y]$ to a longer chain of sausages (i.e. the reverse of a sausage shortening) results in another graph of the same type.  So we may assume $H$ is type 2C.  If $Y \cap W = \emptyset = Y \cap W'$, then expanding $H[Y]$ to a longer chain of sausages results in another graph of type 2C, otherwise this expansion will result in a graph of type 2A.  
\end{proof}

\subsection{Computation for small graphs}

In this subsection, we describe a computational verification of Theorem \ref{k33-thm} for graphs on at most nine vertices.
We will call on the following simple observation to reduce our task to a finite number of graphs. For $u, v$ distinct vertices of a graph $G$, we denote by $e(u, v)$ the number of edges between $u, v$.

\begin{observation}
	\label{e-mult-cap}
	Let $G, H$ be graphs, and let $u, v \in V(G)$ satisfy $e(u, v) > |E(H)|$.  Then $G\succ H$ if and only if the graph $G'$ obtained from $G$ by deleting one copy of $uv$ edge satisfies $G' \succ H$.
\end{observation}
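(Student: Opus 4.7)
The plan is to split into the two directions, one of which is essentially trivial. For the backward direction, observe that $G'$ is obtained from $G$ by deleting a single edge, so $E(G') \subset E(G)$. Immersion is monotone under edge addition: given an immersion $\phi$ of $H$ in $G'$, the same $\phi$ witnesses an immersion of $H$ in $G$, since every path $\phi(e)$ already lies in $G' \subseteq G$.

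For the forward direction, I would start with an arbitrary immersion $\phi$ of $H$ in $G$ and count. The key observation is that each path $\phi(e)$, for $e \in E(H)$, uses at most one copy of the edge between $u$ and $v$. This is because $\phi(e)$ is a path (with no repeated vertices), hence traverses each of $u$ and $v$ at most once, so it can cross at most one of the parallel $uv$-edges. Summing over the $|E(H)|$ paths, the total number of copies of $uv$ appearing in $\bigcup_{e\in E(H)} E(\phi(e))$ is at most $|E(H)|$. Since $e(u,v) > |E(H)|$, pigeonhole gives a copy $f$ of the edge $uv$ that is not used by the immersion, and then $\phi$ is also an immersion of $H$ in $G-f$.

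To match the graph $G'$ of the statement (obtained by deleting some specified copy of $uv$), I would invoke interchangeability of parallel edges: swapping any two copies of an edge defines an automorphism of $G$, so $G - f \cong G'$ irrespective of which copy was deleted. Composing $\phi$ with this isomorphism yields an immersion of $H$ in $G'$.

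I do not foresee any real obstacle. The whole argument is a one-line pigeonhole count. The only mild subtlety is the reliance on the convention fixed in the introduction that each $\phi(e)$ is a \emph{path} (no repeated vertices); if $\phi(e)$ were allowed to be an arbitrary edge-disjoint trail, a single $\phi(e)$ could consume many parallel copies of $uv$ on its own and the bound $|E(H)|$ would fail.
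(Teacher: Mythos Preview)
Your argument is correct and is precisely the intended reasoning; the paper in fact states this as an observation without proof, treating the pigeonhole count you give as self-evident. Your remark that the bound depends on each $\phi(e)$ being a genuine path (not a trail) is the only point worth noting, and it matches the paper's definition of immersion.
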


\begin{lemma}
	\label{k33-code}
	Theorem \ref{k33-thm} holds for every graph $G$ with $|V(G)| \le 9$.
\end{lemma}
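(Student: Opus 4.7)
The plan is a computer verification on a finite list of multigraphs. The first step is to bound edge multiplicities. By iterating Observation \ref{e-mult-cap} with $H=K_{3,3}$, any graph $G$ with $|V(G)|\le 9$ reduces to a graph $G^\dagger$ with $e(u,v)\le 9$ for every pair, and $G\succ K_{3,3}$ iff $G^\dagger\succ K_{3,3}$. A short case-check shows the classification into types $0$--$4$ is also preserved under this reduction: types $0$, $3$ and $4$ already force $e(u,v)\le 3$ from their definitions, and in types $1$ and $2$ any pair with $e(u,v)>4$ must lie inside the head or tail of the segmentation (type 1) or inside one of the identified sets $W,W'$ (type 2), and parallel edges in those locations may be added or removed freely without breaking the structure. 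So the lemma reduces to checking the theorem on every $3$-edge-connected, internally $4$-edge-connected multigraph $G$ with $6\le|V(G)|\le 9$ and all edge multiplicities at most $9$. This is a finite family which I would enumerate up to isomorphism using a canonical-labelling routine (for example \texttt{nauty}), with the connectivity hypotheses applied as early-rejection filters.

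On each enumerated $G$ the program would run two independent procedures. The first decides whether $G\succ K_{3,3}$ by iterating over all $\binom{|V(G)|}{6}$ terminal sets together with all balanced bipartitions into two triples, and then backtracking for nine pairwise edge-disjoint paths realising the bipartite adjacencies; on at most nine vertices this search tree is small and can be pruned by deleting used edges and by degree considerations. The second decides whether $G$ has one of the types $0,1,2,3,4$: type $0$ is a planarity test combined with cubicity; type $1$ is found by enumerating initial sets $X$ with $|X|\le 3$ and $d(X)=4$ and greedily extending one vertex at a time while maintaining $d(X_i)=4$; type $2$ is recognised by iterating over candidate pairs $(W,W')$ of size at most $2$, forming the quotient, and testing whether the result matches one of the three doubled-cycle patterns $2A,2B,2C$; types $3$ and $4$ reduce, after performing sausage reduction, to graph-isomorphism tests against the fixed lists of $20$ and $14$ graphs in Figures \ref{k33-exceptions-1} and \ref{k33-exceptions-2}. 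The lemma holds provided the two procedures give matching answers on every enumerated $G$.

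The main obstacle is keeping the enumeration tractable: even with multiplicity capped at $9$, the space of $9$-vertex candidates is large, so isomorph-free generation and aggressive pruning by the $3$-edge-connectivity and internal $4$-edge-connectivity filters are essential. A secondary difficulty is the $K_{3,3}$-immersion search itself, which is worst-case exponential in $|V(G)|$; on these small inputs it is manageable but benefits from symmetry reduction (the automorphism group $S_3\wr S_2$ acts on terminal assignments) and from solving the resulting edge-disjoint paths instances by depth-first backtracking with used-edge elimination. Everything else in the verification (planarity, sausage reduction, segmentation search, isomorphism against a fixed list) is standard and will account for a negligible share of the runtime.
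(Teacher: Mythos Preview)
Your plan is logically sound, but it omits the one reduction that makes the computation tractable in the paper. Before enumerating, the paper invokes Lemma~\ref{no-deep-4ecut}: a minimum counterexample must satisfy $d(X)\ge 5$ whenever $|X|,|V\setminus X|\ge 3$. This extra cut condition is imposed alongside the multiplicity cap from Observation~\ref{e-mult-cap}, and it is what keeps the search small. With it in hand, the paper works bottom-up: list the connected \emph{simple} graphs on $n$ vertices, discard those already immersing $K_{3,3}$, and only then grow each survivor into the edge-minimal multigraphs meeting the connectivity and cut constraints (the \texttt{repair} step), finally fattening multiplicities up to~$9$. For $n=9$ this runs in a few hours; your direct isomorph-free enumeration of all $3$-edge-connected, internally $4$-edge-connected multigraphs on $9$ vertices with multiplicities up to~$9$---with no cut filter---is a vastly larger space and is the obstacle you yourself flag.

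The cut restriction also streamlines the type check. Under $d(X)\ge 5$ for $|X|,|V\setminus X|\ge 3$, no graph on $\ge 6$ vertices can be type~1, so the paper only tests for type~0, type~2, and isomorphism against the $34$ sporadic graphs of Figures~\ref{k33-exceptions-1} and~\ref{k33-exceptions-2}; your type~1 segmentation search is unnecessary. A side benefit is that your careful ``type-preservation under multiplicity reduction'' argument becomes moot: one simply observes that a minimum counterexample, if it existed with $|V|\le 9$, would already satisfy both the cut condition and (after harmless trimming of parallel classes via Observation~\ref{e-mult-cap}) the multiplicity cap, so it would appear in the enumerated list. In short, your approach and the paper's agree in spirit, but the paper buys feasibility by spending Lemma~\ref{no-deep-4ecut} up front.
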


\begin{proof}
Thanks to Lemma \ref{no-deep-4ecut} and Observation \ref{e-mult-cap} it suffices to establish the Theorem for all graphs $G$ satisfying the following properties:
\begin{itemize}
\item $6 \le |V(G)| \le 9$,
\item $G$ is 3-edge-connected and internally 4-edge-connected,
\item Every $X \subset V(G)$ with $|X|, |V(G) \setminus X| \ge 3$ satisfies $d(X) \ge 5$,  
\item Every parallel class has size at most 9.
\end{itemize}

This calculation was done in Sagemath and the code may be found on the arXiv.  Here is a high-level description of the algorithm, which is run for $6 \le n \le 9$.  

	\begin{description}
		\item[Step 1.] We take the list of all connected simple graphs on $n$ vertices, and filter out the ones which immerse $K_{3, 3}$.
		\item [Step 2.]
		For any graph $G $ surviving from Step 1, \verb|repair(G)| generates a list consisting of all edge-minimal multigraphs $G'$ such that:
		\begin{itemize}
			\item
			the underlying simple graph of $G'$ is $G$,
			\item 
			$G'$ is 3-edge-connected and internally 4-edge-connected,
			\item
			for any set $X \subset V(G')$ where $3 \le |X| \le \left \lfloor \frac{n} {2}\right \rfloor$ we have $d_{G'} (X) \ge 5$,
			\item
			$G'$ does not immerse $K_{3,3}$. 
		\end{itemize}
	\item[Step 3.] Suppose the simple connected graph $G$ is such that \verb|repair(G)| is nonempty. Let $\mathcal{G}_1=$ \verb|repair(G)|. Then, using $\mathcal{G}_1$, we generate $\mathcal{G}_2 =$ \verb|obstruction(G)| which is the list consisting of all multigraphs whose underlying simple graph is $G$, meet the edge-connectivity conditions that the graphs in $\mathcal{G}_1$ satisfy, have edge-multiplicity at most nine, and do not immerse $K_{3,3}$.
	\item[Step 4.] Every graph in $\mathcal{G}_2$ is tested if it has one of the types 0, 2, or is isomorphic to one of the graphs in Fig. \ref{k33-exceptions-1} or \ref{k33-exceptions-2}.	
	\end{description}
	The calculation is done rather fast. It took a desktop computer 25 minutes to do the calculation for every $n \in \{6, 7, 8\}$. However, the time spent on $n =9$ was considerably more. It took the computer one hour to carry out step 1, i.e. to check the nearly 262,000 connected simple graphs on nine vertices for a $K_{3, 3}$ immersion, thereby giving a list \verb|N9| of almost 34,100 simple connected graphs on nine vertices without a $K_{3, 3}$ immersion. Then a total of four hours was spent on carrying out steps 2, 3 for every graph in \verb|N9|. Since no obstruction is found for $n =9$, step 4 is not performed for this case.
\end{proof}

\subsection{Five edge cuts}

In this subsection we study 5-edge-cuts in a minimal counterexample to Theorem \ref{k33-thm}. The main result is Lemma \ref{no-deep-5ecut}, which tells us that one side of every $5$-edge-cut in a minimal counterexample  has at most three vertices.  The lemma is a powerful tool in carrying out the inductive step in the proof of  the main theorem.  As a first step toward this, we record two local properties of a minimal counterexample which we frequently apply.

\begin{lemma}
	\label{k33-local}
	If $ G = (V, E) $ is a counterexample to Theorem \ref{k33-thm} with $|V|$ minimum, then:
	\begin{enumerate}[label=(\arabic*)]
		\item
		\label{no-greedy-nbr}
		There does not exist $u\in V$ which has a neighbour $v$ such that $e (u, v)\ge \frac{1}{2} d(u)$.
		\item 
		\label{4ecut}
		If $X \subset V$ satisfies $2 \le |X| \le \frac{1}{2}|V|$ and $d(X) = 4$, then $|X|=2$ and both vertices in $X$ have degree three.
	\end{enumerate}
\end{lemma}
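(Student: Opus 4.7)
Part (2) follows from Part (1) combined with Lemma \ref{no-deep-4ecut}. Since $|X| \le |V|/2$, if $|X| \ge 3$ then also $|V \setminus X| \ge 3$, contradicting Lemma \ref{no-deep-4ecut}; hence $|X| = 2$. Writing $X = \{x_1, x_2\}$, the identity $d(x_1) + d(x_2) = 4 + 2 e(x_1, x_2)$ combined with $d(x_1), d(x_2) \ge 3$ forces $e(x_1, x_2) \ge 1$, so $x_1$ and $x_2$ are adjacent. If some $d(x_i) \ge 4$, say $d(x_1) \ge 4$, then $e(x_1, x_2) = \tfrac{1}{2}(d(x_1) + d(x_2) - 4) \ge d(x_2)/2$, contradicting Part (1) applied to $x_2$ with neighbor $x_1$. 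By the symmetric argument $d(x_1) = d(x_2) = 3$.

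For Part (1), suppose for contradiction that $u$ has a neighbor $v$ with $k := e(u, v) \ge d(u)/2$. Form $G^* = G/uv$ by identifying $u$ and $v$ into a single vertex $z$ (deleting any resulting loops). Since $k \ge d(u) - k$, each of the $d(u) - k$ non-$uv$ edges at $u$ can be split off at $u$ with a distinct $uv$ edge; the vertex $u$ then either becomes isolated or carries only $2k - d(u)$ parallel edges to $v$, and in either case can be removed by deletions. This realizes $G^*$ through immersion operations, so $G^* \prec G$ and hence $G^* \nsucc K_{3,3}$. A direct cut-correspondence argument --- $d_{G^*}(Y)$ equals $d_G(Y')$, where $Y' = Y$ if $z \notin Y$ and $Y' = (Y \setminus \{z\}) \cup \{u, v\}$ otherwise --- shows that $G^*$ inherits 3-edge-connectivity and internal 4-edge-connectivity from $G$. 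By Lemma \ref{k33-code} we have $|V(G)| \ge 10$, so $|V(G^*)| \ge 9$, and minimality of $G$ forces $G^*$ to be one of types 0, 1, 2, 3, or 4.

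The remaining task is to derive a contradiction by showing that $G$ itself would then be of one of these types. Type 0 is excluded immediately: if $G^*$ is cubic then $d_{G^*}(z) = 3$, whence $d_G(\{u, v\}) = d_G(u) + d_G(v) - 2k = d_{G^*}(z) = 3$, violating internal 4-edge-connectivity. For types 1, 2, and 3 one expands $z$ back into the pair $\{u, v\}$ joined by $k$ parallel edges and verifies that the segmentation (type 1) or doubled-cycle structure (types 2, 3) of $G^*$ lifts to the analogous structure in $G$; type 4 and the sporadic members of type 3 come down to a finite check against the exceptional lists. The main obstacle is the type 1 lifting: the segmentation $X_0 \subset \cdots \subset X_k$ of $G^*$ must be refined by interpolating either $X_j \cup \{u\}$ or $X_j \cup \{v\}$ at the step where $z \in X_{j+1} \setminus X_j$, and verifying that at least one such interpolation has external degree $4$ in $G$ requires a careful degree count that leverages the inequality $k \ge d(u)/2$.
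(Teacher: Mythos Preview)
Your treatment of Part~(2) is correct and matches the paper's (which just says it follows from Part~(1) and Lemma~\ref{no-deep-4ecut}; your explicit degree computation is fine).

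For Part~(1) you begin exactly as the paper does: form $G^{*}=G.\{u,v\}$, note that it is $3$-edge-connected and internally $4$-edge-connected, does not immerse $K_{3,3}$, has $|V(G^{*})|\ge 9$, and hence by minimality is of type $0,1,2,3$ or $4$; and you dispose of type~0 correctly.

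Where you diverge is in eliminating types $1$--$4$.  You attempt to \emph{lift} the type structure from $G^{*}$ back to $G$, and this is both harder than necessary and, as written, a genuine gap.  Your proposed type~1 interpolation need not work: take $d(u)=4$, $e(u,v)=2$, with $u$ sending one edge to $X_{j}$ and one to $V\setminus(X_{j}\cup\{u,v\})$, while $v$ has degree~$6$ and sends two edges to each side.  Then $d_{G^{*}}(X_{j})=d_{G^{*}}(X_{j}\cup\{z\})=4$, but $d_{G}(X_{j}\cup\{u\})=d_{G}(X_{j}\cup\{v\})=6$, so neither interpolation has width~$4$.  Your lifting claims for types~2 and~3 are likewise unproved (for instance, if $z\in W$ with $|W|=2$, expanding $z$ makes $|W|=3$), and type~3 is an infinite family, so ``a finite check'' does not suffice.

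The paper avoids all of this by invoking Lemma~\ref{no-deep-4ecut} one more time --- applied to $G$, not to $G^{*}$.  Any set $Y\subset V(G^{*})$ with $|Y|,|V(G^{*})\setminus Y|\ge 3$ and $d_{G^{*}}(Y)=4$ pulls back (via your own cut correspondence) to a $4$-edge-cut in $G$ with both sides of size $\ge 3$, contradicting Lemma~\ref{no-deep-4ecut}.  Hence $G^{*}$ has no such cut; this immediately rules out type~1 and simultaneously shows $G^{*}$ is sausage-reduced.  Since a sausage-reduced graph of type~$2$, $3$ or~$4$ has at most $8$ vertices, the bound $|V(G^{*})|\ge 9$ finishes the job in one line.
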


\begin{proof}
For part (1), suppose for a contradiction that such $u, v$ exist and let $G' = G.\{u,v\}$.  Since  $|V(G')| < |V(G)|$, Theorem \ref{k33-thm} holds for $G'$.  The graph $G'$ cannot have an immersion of $K_{3,3}$ as otherwise $K_{3,3} \prec G' \prec G$.  Therefore $G'$ must have type 0, 1, 2, 3, or 4.  Type 0 is impossible since $G'$ cannot be cubic (the vertex formed by identifying $u$ and $v$ has degree at least 4).  It follows from Lemma 
\ref{no-deep-4ecut} that $G'$ does not have a set $X \subseteq V(G')$ with $|X|, |V(G') \setminus X| \ge 3$ and $d(X) = 4$ and thus $G'$ cannot be type 1.  It follows from this same property that $G'$ must be sausage reduced, but then the lower bound $9 \le |V(G')|$ (implied by Lemma \ref{k33-code}) prevents $G'$ from having types 2, 3, or 4.  This contradiction completes the proof of \ref{no-greedy-nbr}.  

Part \ref{4ecut} is an immediate consequence of Lemma \ref{no-deep-4ecut} and part \ref{no-greedy-nbr}.
\end{proof}

Next we introduce a bit of convenient terminology.

\begin{definition}
	\normalfont
	Let $G$ be a graph, let $X \subseteq V(G)$, let $e \in \delta(X)$, and let $x$ be the endpoint of $e$ in $X$.  We say that $X$ is \emph{almost cubic relative to} $e$ if $d(x) \in \{3,4\}$ and $d(u) = 3$ for every $u \in X \setminus \{x\}$.
\end{definition}

We now begin our investigation of 5-edge-cuts in a minimum counterexample by establishing a technical condition.

\begin{lemma} 
	\label{5ecut-e}
Let $G$ be a counterexample to Theorem \ref{k33-thm} with $|V|$ minimum.  Let $\delta(X)$ be a $5$-edge-cut such that $|X|, |V\setminus X| \ge 4$. If $e\in \delta(X)$ satisfies $(G\setminus e).X\succ_r W_4$, then:
	\begin{itemize}
		\item 
		$(G\setminus e).(V\setminus X)\succ_r W_4$,
		\item
		Both $X$ and $V \setminus X$ are almost cubic relative to $e$.
	\end{itemize}
\end{lemma}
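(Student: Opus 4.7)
The plan is to mimic the argument of Lemma \ref{w4-eye-k33}, deploying it on the $X$-side of the $5$-edge-cut. Fix the given rooted $W_4$-immersion in $(G\setminus e).X$, let its rim be $v_1v_2v_3v_4$ with $v_i\in V\setminus X$, and let $e_1,e_2,e_3,e_4\in\delta(X)\setminus\{e\}$ be the edges at the root, with $e_i$ lying on the path to $v_i$. Write $a_i$ for the endpoint of $e_i$ in $X$ and $x$ for the endpoint of $e$ in $X$. Form $G''$ from $G\setminus e$ by subdividing $e_1,e_3$ and identifying the two new degree-two vertices to a single vertex $b$, and subdividing $e_2,e_4$ and identifying to $c$. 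Let $H:=G''[X\cup\{b,c\}]$ with roots $(b,c)$; then $d_H(b)=d_H(c)=2$.

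The heart of the proof is the claim that $H\nsucc_r$ Eyeglasses. If instead an Eyeglasses immersion exists, let $\tilde u,\tilde w\in X$ be the images of the two middle vertices; the immersion then yields five edge-disjoint paths in $G[X]$---from each of $a_1,a_3$ to $\tilde u$, from each of $a_2,a_4$ to $\tilde w$, and one from $\tilde u$ to $\tilde w$. Let $u_i\in V\setminus X$ be the other end of $e_i$. Combining these five paths in $X$ with the $W_4$-immersion on the $V\setminus X$-side (four edge-disjoint paths from $u_i$ to $v_i$ together with the rim $C_4$) and with the four edges $e_1,\dots,e_4$ themselves, one checks that nine pairwise edge-disjoint paths realize a $K_{3,3}$ immersion in $G\setminus e$ with the bipartition $\{\tilde u,v_2,v_4\}\cup\{\tilde w,v_1,v_3\}$. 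This gives $K_{3,3}\prec G\setminus e\prec G$, contradicting the minimality of $G$.

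To apply Theorem \ref{eye-thm} to $H$ its hypotheses (i)--(iii) must be checked. A direct count gives $d_H(Y)=d_G(Y)-[x\in Y]$ for $Y\subseteq X$, with analogous formulas for sets meeting $\{b,c\}$; then $3$-edge-connectivity, internal $4$-edge-connectivity of $G$ and Lemma \ref{no-deep-4ecut} cover all cases, with the single borderline case $Y=\{x\}$ and $d_G(x)=3$ dispatched separately, since in that situation the almost-cubic property on $X$ is already forced. Hence $H$ is one of the Type~1, 2, or 3 obstructions from Theorem \ref{eye-thm}. Type~3 demands $|V(H)|=4$, i.e.\ $|X|=2$, against $|X|\ge 4$. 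Type~2 forces $G[X]$ to be a doubled path; any interior vertex (present since $|X|\ge 4$) has degree $4$ with two parallel edges to a single neighbour, violating Lemma \ref{k33-local}\ref{no-greedy-nbr}. Hence $H$ is of Type~1.

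From Type~1 we read off both conclusions. Every $v\in X$ has $d_H(v)=3$, and since $d_H(v)=d_G(v)-[v=x]$ we get $d_G(v)=3$ for $v\in X\setminus\{x\}$ and $d_G(x)=4$, so $X$ is almost cubic relative to $e$. Moreover $G[X]=H\setminus\{b,c\}$ has a planar embedding with $a_1,a_2,a_3,a_4$ appearing in this cyclic order on the outer face; since $G[X]$ is subcubic and $2$-edge-connected, hence $2$-connected, the outer face is bounded by a cycle through the $a_i$, providing a $C_4$-immersion on them. Together with the four root-edges $e_1,\dots,e_4$ of $(G\setminus e).(V\setminus X)$, this exhibits $(G\setminus e).(V\setminus X)\succ_r W_4$. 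A symmetric rerun---using the just-obtained immersion, the hypothesis $|V\setminus X|\ge 4$, and the analogous construction with $X$ and $V\setminus X$ interchanged---gives that $V\setminus X$ is also almost cubic relative to $e$. The main obstacle is the connectivity bookkeeping for the Theorem \ref{eye-thm} hypotheses on $H$, especially the degenerate cases ($d_G(x)=3$, or coincidences among $x,a_1,a_2,a_3,a_4$); the $K_{3,3}$ assembly in the key claim mirrors Figure \ref{fig:w4-eye-k33-G} directly.
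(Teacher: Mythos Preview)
Your overall strategy is the same as the paper's: use the rooted $W_4$ on the $(V\setminus X)$-side together with an Eyeglasses analysis on the $X$-side to force Type~1 and hence almost-cubicness, and then flip sides.  The paper executes this more cleanly by first passing to the graph $H$ obtained from $G$ by deleting $e$ \emph{and suppressing any resulting degree-two vertices}, and then applying Lemma~\ref{w4-eye-k33} to $H$ as a black box (with $d_H(X')=4$).  You instead rerun the proof of Lemma~\ref{w4-eye-k33} inline, applying Theorem~\ref{eye-thm} directly to your $H=G''[X\cup\{b,c\}]$, which forces you to do all the connectivity bookkeeping by hand.

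That bookkeeping has a genuine gap.  Your claim that the ``single borderline case $Y=\{x\}$ with $d_G(x)=3$'' can be ``dispatched separately, since in that situation the almost-cubic property on $X$ is already forced'' is false: knowing $d_G(x)=3$ says nothing about the degrees of the other vertices of $X$.  Moreover, $Y=\{x\}$ is not the only failure of the hypotheses of Theorem~\ref{eye-thm} when $d_G(x)=3$: for any two-element set $Y\subseteq X$ containing $x$ with $d_G(Y)=4$ (which Lemma~\ref{k33-local}\ref{4ecut} permits precisely when both vertices---hence $x$---have degree~3) you get $d_H(Y)=3$, violating hypothesis~(iii).  So Theorem~\ref{eye-thm} simply does not apply to your $H$ in this situation, and nothing you have written recovers the conclusion.

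The fix is exactly what the paper does: when $d_G(x)=3$, suppress $x$ in $G\setminus e$ (equivalently, work with $H$ after suppressing its degree-two vertex) before invoking the Eyeglasses theorem.  Then hypotheses (ii) and (iii) go through via internal $4$-edge-connectivity and Lemma~\ref{k33-local}\ref{4ecut}, the Type~1 conclusion gives $d_H(v)=3$ for all remaining $v\in X$, and since suppression only removed a degree-three vertex you recover that $X$ is almost cubic relative to~$e$.  Once you make this correction, your argument and the paper's coincide.
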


\begin{proof}
	Let $H$ be the graph obtained from $G$ by deleting $e$ and then suppressing any vertices of degree two.  It follows from the internal 4-edge-connectivity of $G$ that $H$ is 3-edge-connected, and it follows from Lemma \ref{k33-local}\ref{4ecut} that $H$ is internally 4-edge-connected.  Since $G$ immerses $H$ we must have $H \nsucc K_{3,3}$.  Let $X' \subset V(H)$ be the subset of $H$ corresponding to $X$ (i.e. either $X' = X$ or $X'$ is obtained from $X$ by removing a vertex that was suppressed in forming $H$).  It now follows from Lemma \ref{w4-eye-k33} that one of the following holds:
	\begin{itemize}
		\item
		$H[X']$ is a chain of sausages (in $H$)
		\item 
		for every vertex $v \in X'$ we have $d_H(v) = 3$, and $H. (V(H) \setminus X') \succ_r W_4$ 
	\end{itemize}
The former case is not possible since this would cause $G$ to have a degree four vertex incident with parallel edges contradicting Lemma \ref{k33-local}\ref{no-greedy-nbr}.  So we must have the latter case.  This implies that our original graph satisfies $(G \setminus e).(V \setminus X) \succ_r W_4$ and that $X$ is almost cubic relative to $e$.  Now we may apply the same argument with $V \setminus X$ in place of $X$ to deduce that $V \setminus X$ is also almost cubic relative to $e$, and this completes the proof.
\end{proof}

Next we prove our main result from this subsection.  

\begin{lemma}
	\label{no-deep-5ecut}
	If $G = (V,E)$ is a counterexample to Theorem \ref{k33-thm} with $|V|$ minimum, there does not exist
	$X \subset V(G)$ so that $|X|, |V\setminus X| \ge 4$ and $d(X) \le 5$.
\end{lemma}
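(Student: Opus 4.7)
Suppose for contradiction that such an $X$ exists. Because $G$ is a minimum counterexample, Lemma \ref{k33-code} gives $|V(G)| \ge 10$, so (after possibly exchanging $X$ and $V \setminus X$) I may assume $|V \setminus X| \ge 5$. My first aim is to show that the rooted graph $G.(V \setminus X)$, with the identified vertex $r$ (of degree $5$) as root, admits a rooted immersion of $W_4$. For this I apply Corollary \ref{w4cor}(2). Its hypothesis---that no degree-$4$ vertex not adjacent to $r$ has fewer than four distinct neighbours---holds because any such vertex $v$ lies entirely inside $V \setminus X$, its neighbourhoods in $G$ and $G.(V\setminus X)$ coincide, and Lemma \ref{k33-local}\ref{no-greedy-nbr} prohibits degree-$4$ vertices with parallel edges in a minimum counterexample. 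The corollary leaves three possibilities: $G.(V\setminus X) \succ_r W_4$, type I, or $|V(G.(V\setminus X))| \le 5$. The last is ruled out by $|V \setminus X| \ge 5$, and type I would give a $(2,3)$-segmentation whose size-$2$ head $\{r, v\}$ pulls back in $G$ to the set $(V\setminus X) \cup \{v\}$, a $4$-edge-cut with both sides of size at least $3$, contradicting Lemma \ref{no-deep-4ecut}. Hence $G.(V\setminus X) \succ_r W_4$.

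Because $r$ has degree $5$ while a rooted $W_4$ immersion consumes only four edges at the hub, there exists $e \in \delta(X)$ unused by the immersion, so $(G \setminus e).(V\setminus X) \succ_r W_4$. Applying Lemma \ref{5ecut-e} with $V\setminus X$ in the role of $X$ (using the symmetry $\delta(X) = \delta(V\setminus X)$), I obtain that both $X$ and $V\setminus X$ are almost cubic relative to $e = xy$. Consequently every vertex of $G$ other than $x, y$ has degree $3$, and $d(x), d(y) \in \{3, 4\}$; a parity check on the total degree then forces $d(x) = d(y)$.

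In the subcase $d(x) = d(y) = 3$, the whole graph $G$ is cubic. In a cubic graph two distinct immersion paths cannot share an internal vertex (which would then be incident with at least four immersion edges), so every immersion of $K_{3,3}$ is a subdivision, and in turn coincides with a $K_{3,3}$ minor. Kuratowski's Theorem applied to the $3$-edge-connected cubic graph $G$ then gives either $G$ planar (hence $G$ is type 0, contradicting that $G$ is a counterexample) or $K_{3,3} \prec G$ as a minor, and hence as an immersion (contradicting $G \nsucc K_{3,3}$).

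In the remaining subcase $d(x) = d(y) = 4$, the graph $G \setminus e$ is cubic. Every $3$-edge-cut of $G$ is a singleton at a cubic vertex (by internal $4$-edge-connectivity together with Lemma \ref{k33-local}\ref{4ecut}), and $e$ is incident only with degree-$4$ vertices, so no $3$-edge-cut of $G$ contains $e$; hence $G \setminus e$ remains $3$-edge-connected. The cubic analysis above applied to $G \setminus e$ forces it planar (otherwise $G \setminus e \succ K_{3,3}$ and so $G \succ K_{3,3}$, a contradiction). The final contradiction must now be extracted by combining this planar structure with the extra edge $e = xy$ and the two rooted $W_4$ immersions on opposite sides of $\delta(X)$---each of which consumes all four edges of $\delta(X) \setminus \{e\}$ as hub-rim spokes and exhibits a rim $C_4$ on four terminals inside its side, together producing eight rim terminals, two rim $C_4$s, and a matching between them through $\delta(X) \setminus \{e\}$. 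My plan is to use this data, together with $e$, to display a $K_{3,3}$ immersion in $G$, contradicting $G \nsucc K_{3,3}$. Locating the six $K_{3,3}$ terminals among the rim vertices and $\{x, y\}$ and routing the nine edge-disjoint $K_{3,3}$ paths along the two rim $C_4$s, the four matched crossings, and $e$ is the main obstacle; it is expected to require a case analysis on the matching $\sigma$ between the two sets of rim terminals and on whether $x$ (respectively $y$) appears among the rim terminals on its side of $\delta(X)$.
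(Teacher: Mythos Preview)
Your opening is tangled: you write $G.(V\setminus X)$ throughout, but two of your checks (``$v$ lies entirely inside $V\setminus X$'' and ``ruled out by $|V\setminus X|\ge 5$'') only make sense for $G.X$. Whichever graph you intend, one of these checks fails. Concretely, if you really collapse $V\setminus X$, then $|V(G.(V\setminus X))|=|X|+1$, and when $|X|=4$ this is exactly $5$, so Corollary~\ref{w4cor}(2) does not rule out the ``$\le 5$ vertices'' outcome. The paper avoids this by collapsing the \emph{smaller} side: it arranges $|V\setminus X|\ge 5$ and works with $G.X$, which then has at least six vertices.

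More seriously, your parity claim is false. With every vertex except $x,y$ cubic, the handshake lemma gives $d(x)+d(y)\equiv |V|\pmod 2$, so $d(x)=d(y)$ is forced only when $|V|$ is even. The mixed case $\{d(x),d(y)\}=\{3,4\}$ is genuinely possible under your hypotheses and is simply not addressed.

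Finally, your $d(x)=d(y)=4$ case is not a proof but a plan, and the plan is the hard part. The paper does not attempt to build a $K_{3,3}$ by hand from the two $W_4$ immersions. Instead it extracts structural information and \emph{re-applies} Lemma~\ref{5ecut-e} for other edges of the cut. Precisely: from ``almost cubic on both sides'' the graph has at most two non-cubic vertices, so Kuratowski forces $G$ planar; each of $G[X]$ and $G[V\setminus X]$ is then $2$-connected subcubic with a planar embedding whose outer cycle contains all endpoints $S$, $T$ of $\delta(X)$. If $|T|=5$ (five distinct endpoints on one side), the outer cycle exhibits a rooted $W_4$ in $(G-f).X$ for \emph{every} $f\in\delta(X)$, so Lemma~\ref{5ecut-e} applied to each $f$ forces $G$ cubic (type~0), a contradiction; similarly for $|S|=5$. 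Hence $|S|=|T|=4$, giving a unique $s\in S$ and $t\in T$ each meeting two cut edges. Now $(G-f).(V\setminus X)\succ_r W_4$ iff $f$ is incident with $s$, and symmetrically for $t$; feeding this back through Lemma~\ref{5ecut-e} forces $s$ and $t$ to be joined by two parallel edges, contradicting Lemma~\ref{k33-local}\ref{no-greedy-nbr}. This iteration over different choices of $e$ is the missing idea.
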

\begin{proof}
Suppose (for a contradiction) that such a set $X$ exists, and note that by Lemma \ref{no-deep-4ecut} we must have $d(X) = 5$.  It follows from Lemma \ref{k33-code} that $|V| \ge 10$ and therefore we may assume (by possibly interchanging $X$ and $V \setminus X$) that $G.X$ has at least 6 vertices.  It follows from Lemma \ref{k33-local}\ref{no-greedy-nbr} that $G.X$ does not have  a vertex of degree four which is not incident with the vertex $X$ and has less than four neighbours.  Thus Corollary \ref{w4cor} implies that $G.X$ has a rooted immersion of $W_4$.  Choose $e \in \delta(X)$ so that $(G-e).X$ has a rooted immersion of $W_4$.  Now Lemma \ref{5ecut-e} implies that $(G-e).(V \setminus X)$ also has a rooted immersion of $W_4$ and that both $X$ and $V \setminus X$ are almost cubic relative to $e$.  Let $S$ ($T$) denote the set of all vertices in $X$ ($V \setminus X$) incident with an edge in $\delta(X)$ and observe that we must have $|S|, |T| \ge 4$.  

The graph $G$ has at most two vertices with degree greater than three, so it follows from Kuratowski's Theorem that $G$ is planar (otherwise $G$ would have a $K_{3,3}$ subdivision and hence a $K_{3,3}$ immersion).  Since $G[X]$ may be obtained from $G.(V \setminus X)$ by deleting the root, this graph has an embedding in the plane for which all vertices in $S$ are incident with the unbounded face.  It follows from the internal 4-edge-connectivity of $G$ and Lemma \ref{no-deep-4ecut} that $G[X]$ does not have a cut-edge.  This together with the fact that $G[X]$ has maximum degree at most 3 implies that it is 2-connected.  Therefore, the unbounded face in our planar embedding is bounded by a cycle $C$ with $S \subseteq V(C)$.  A similar argument shows that $G[ V \setminus X]$ has a cycle $D$ with $T \subseteq V(D)$.  

First suppose that $|T| = 5$.  In this case the existence of the cycle $D$ implies that $(G-f).X$ has a rooted immersion of $W_4$ for every $f \in \delta(X)$.  Now Lemma \ref{5ecut-e} implies that both $X$ and $V \setminus X$ are almost cubic relative to $f$ for every $f \in \delta(X)$.  This forces $G$ to be a cubic graph, and now Kuratowski's Theorem implies that $G$ is type 0 giving us a contradiction.  A similar argument handles the case when $|S| = 5$.  So we may assume $|S| = |T| = 4$.  In this case there is a unique vertex $s \in S$ ($t \in T$) incident with two edges of $\delta(X)$.  Moreover, for every $f \in \delta(X)$ the graph $(G-f).(V \setminus X)$ ($(G-f).X$) has a rooted immersion of $W_4$ if and only if $f$ is incident with $s$ ($t$).  It now follows from Lemma \ref{5ecut-e} that $s$ and $t$ are joined by two parallel edges.  However, this contradicts Lemma \ref{k33-local}\ref{no-greedy-nbr}, thus completing our proof.
\end{proof}

\subsection{Finishing the proof}

\label{k33-pf-end}

In this subsection, we combine our lemmas to complete a proof of the main theorem.

\begin{proof}[Proof of Theorem \ref{k33-thm}]
Suppose (for a contradiction) that Theorem \ref{k33-thm} is false and choose a counterexample $G = (V,E)$ so that
\begin{enumerate}[label=(\roman*)]
\item $|V|$ is minimum.
\item $|E|$ is minimum subject to (i).
\end{enumerate}
Note that Lemmas \ref{no-deep-4ecut} and \ref{no-deep-5ecut} imply that every 4-edge-cut of $G$ has one side of size at most two and every 5-edge-cut of $G$ has one side of size at most three.  Lemma \ref{k33-code} implies that $|V| \ge 10$ and Lemma \ref{k33-local} implies that no vertex has at least half of its incident edges in a common parallel class.  

First suppose that there exist vertices $u,v$ with $e(u,v) \ge 2$ and let $G'$ be the graph obtained from $G$ by deleting one edge between $u$ and $v$.  It follows from Lemma \ref{k33-local} that $G'$ is 3-edge-connected and internally 4-edge-connected.  So, by the minimality of our counterexample, the Theorem applies to $G'$ and it must have type 0, 1, 2, 3, or 4.  Since $G'$ is sausage reduced and has at least 10 vertices, types 2, 3, and 4 are impossible.  If $G'$ is type 0, it is cubic and simple, and thus $G$ contradicts Lemma \ref{k33-local}\ref{4ecut}. If $G'$ is type 1, we get a contradiction with either Lemma \ref{k33-local}\ref{4ecut} or \ref{no-deep-5ecut}. 
Therefore our graph $G$ must be simple.  

If $G$ is cubic, then Kuratowski's Theorem implies that $G$ is either type 0 or has an immersion of $K_{3,3}$.  So there must exist a vertex $v \in V(G)$ with $d(v) \ge 4$.  Choose an edge $e \in E$ not incident with $v$ and let $G'$
be the graph obtained from $G$ by deleting $e$, and suppressing any degree two vertices.  So $|V(G')|\ge |V(G)| -2 \ge 8$, and $G'$ has minimum degree at least 3.  It follows from Lemma \ref{k33-local}\ref{4ecut} that $G$ is 3-edge-connected and internally 4-edge-connected.  So by minimality of $G$, Theorem \ref{k33-thm} holds for $G'$, and it must be type 0, 1, 2, 3, or 4.  Since $G'$ is not cubic, it is not type 0.  As in the last paragraph, $G'$ cannot have type 1 as then $G$ would contradict either Lemma \ref{k33-local}\ref{4ecut} or \ref{no-deep-5ecut}.  
Note that it follows from Lemma \ref{no-deep-4ecut} and $G$ being simple that the graph $G'$ is sausage reduced. This together with the bound $|V(G')|\ge 8$ and $G$ being simple imply that $G'$ cannot be type 2B, 2C, 3 or 4. 
In the only remaining case, $G'$ has type 2A relative to some $W,W'$ with $|W| = |W'| = 2$.  However this is incompatible with the assumption that $G$ is simple and Lemma \ref{no-deep-5ecut}.  This final contradiction completes the proof.
\end{proof}

\subsection{Corollaries of the main theorem}  
\label{sub-k33-cor}

In this subsection, we establish a couple consequences of our main theorem.  We prove a bound on the branch-width and path-width of graphs of type 1 which combines with our main theorem to give a best possible bound on the branch-width of suitably connected graphs with no $K_{3,3}$ immersion that are not cubic and planar.  Our main theorem also has an immediate corollary for graphs with no $K_{3,3}$ and $K_5$ immersion and together with our branch-width lemma this permits us a best-possible improvement in the branch-width bound from Theorem \ref{intro-k33-greek}.

%
%
%

\subsubsection*{Branch-width of graphs without $K_{3,3}$ immersion}

If $G=(V,E)$ is a graph and $A,B \subseteq E$ satisfy $A \cup B = E$ and $A \cap B = \emptyset$ then we call $(A,B)$ a \emph{separation}.  The \emph{order} of this separation is the number of vertices that are incident with both an edge in $A$ and an edge in $B$, and we denote the order by $o(A,B)$.  A \emph{branch-decomposition} of $G$ consists of a cubic tree (a tree where every vertex either has degree three or one) $T$ together with an injective mapping $f$ from $E$ to leaves of $T$.  For every edge $e \in E(T)$ the graph $T-e$ has two components, say $T_1, T_2$.  Let $A$ $(B)$ be the set of edges in $E$ which are mapped by $f$ to a leaf in $T_1$ $(T_2)$, and define $w(e) = o(A,B)$.   The \emph{branch-width} of this branch decomposition is the maximum of $w(e)$ taken over all edges of $T$.  The \emph{branch-width} of $G$, denoted $bw(G)$, is the minimum width of a branch decomposition of $G$.  Our main result from this section will give a sharp bound on the branch width for graphs of type 1.  First let us record a key observation about these graphs.

\begin{observation}
\label{seg-nbr}
Let $G$ be a $3$-edge-connected graph with a $(3, 3)$-segmentation of width four $U_0 \subset U_1 \subset \ldots \subset U_t$. Let $U_i \setminus U_{i-1} = \{x_i\}$, and let $Z_i$ be the set of vertices in $U_i$ that are incident with an edge in $\delta (U_i)$, for $i= 1, \ldots , t$. Then for every $1 \le i \le t$ we have:
\begin{enumerate}[label=(\arabic*)]
\item 
\label{xi-d}
$e(x_i, U_{i-1}) = e(x_i, V \setminus U_i) \ge 2$
\item 
\label{Zile3}
$x_i \in Z_i$ and $|Z_i| \le 3$.
\end{enumerate}	
\end{observation}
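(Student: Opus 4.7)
The plan is to analyze the edges incident with $x_i$ by partitioning the edge set according to where its endpoints land among the three pieces $U_{i-1}$, $\{x_i\}$, and $V\setminus U_i$. Set $a = e(x_i, U_{i-1})$, $b = e(U_{i-1}, V \setminus U_i)$, and $c = e(x_i, V \setminus U_i)$. Every edge in $\delta(U_{i-1})$ has one end in $U_{i-1}$ and the other in $\{x_i\} \cup (V \setminus U_i)$, so $d(U_{i-1}) = a + b$; analogously $d(U_i) = b + c$. The width-four hypothesis says $a + b = b + c = 4$, and this immediately forces $a = c$, which is the equality claimed in \ref{xi-d}.

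For the lower bound $a \ge 2$, I would apply $3$-edge-connectivity to the singleton $\{x_i\}$. Since $G$ has no loops, every edge at $x_i$ contributes to either $a$ or $c$, so $d(x_i) = a + c = 2a$. Then $3 \le d(x_i) = 2a$ gives $a \ge 2$, completing part \ref{xi-d}.

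Part \ref{Zile3} now drops out. Since $c = a \ge 2 > 0$, the vertex $x_i$ is incident with at least one (in fact at least two) edges of $\delta(U_i)$, so $x_i \in Z_i$. For the upper bound, note $b = 4 - c \le 2$, and every vertex of $Z_i \setminus \{x_i\}$ lies in $U_{i-1}$ and must be the $U_{i-1}$-endpoint of at least one of the $b$ edges between $U_{i-1}$ and $V \setminus U_i$. Therefore $|Z_i| \le 1 + b \le 3$.

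There is no genuine obstacle here; the argument is essentially bookkeeping on a small cut configuration. The only wrinkle to check is that $U_{i-1}$ and $V \setminus U_i$ are both nonempty for $1 \le i \le t$, so that the $3$-edge-connectivity and width-four conditions can legitimately be invoked on the relevant cuts; this is automatic, since if either were empty the corresponding cut degree would be zero, contradicting the width-four condition.
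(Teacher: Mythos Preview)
Your argument is correct and is exactly the natural bookkeeping one would expect; the paper itself states this result as an Observation without supplying a proof, so there is no alternative approach to compare against.
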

%

Next we prove a lemma that exhibits the key structure of interest.  
%
%
%
%
%
%
%
%

\begin{lemma}
\label{type1-par-cw}
Let $G$ be a graph with a $(3,3)$-segmentation of width four. Then there exists a partition of $E=E(G)$ into $\{E_0, E_1, \ldots , E_k\}$ such that
\begin{itemize}
\item 
Every $E_i$ with $1 \le i \le k-1$ is a parallel class
\item
$o \Big( \bigcup_{i=0}^j E_i , \bigcup_{i=j+1}^k E_i \Big) \le 3$ for $0 \le i \le k-1$.
\end{itemize}
\end{lemma}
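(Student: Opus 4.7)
Following the notation of Observation~\ref{seg-nbr}, I will write the $(3,3)$-segmentation as $U_0 \subset U_1 \subset \cdots \subset U_t$ with $\{x_i\} = U_i \setminus U_{i-1}$, head $U_0$ of size at most $3$, and tail $Y := V(G) \setminus U_t$ also of size at most $3$. My plan is to sweep the edges of $G$ from head to tail: declare
\[
E_0 := E(G[U_0]) \qquad \text{and} \qquad E_k := \delta(U_t) \cup E(G[Y])
\]
as the first and last blocks (both allowed to be arbitrary, since only the middle $E_j$'s are required to be parallel classes), and for each step $i = 1, \ldots, t$ list the back-edge parallel classes of $x_i$---one parallel class for each element of $N_i := N(x_i) \cap U_{i-1}$---as consecutive middle $E_j$'s, in an order to be specified below.

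Write $V_{\le}$ and $V_{>}$ for the sets of vertices incident with a processed and with an unprocessed edge at a given moment, so that the cut order we want to bound is $|V_{\le}\cap V_{>}|$. The cuts at the ``boundaries'' of the sweep are easy to handle: once all back-edges of $x_1, \ldots, x_i$ have been processed the processed edge set is exactly $E(G[U_i])$, whence $V_{\le} \cap V_{>} \subseteq Z_i$ and Observation~\ref{seg-nbr}\ref{Zile3} gives size at most $3$ (the same argument covers the cut just after $E_0$ and the cut just before $E_k$, which are contained in $Z_0 \subseteq U_0$ and $Z_t$ respectively). What requires attention are the intermediate cuts taken inside step $i$, after some proper nonempty subset $S \subsetneq N_i$ has been processed. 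A short case check shows that at such a moment $V_{\le}\cap V_{>} \subseteq \{x_i\}\cup Z'_i\cup(N_i\setminus S)$, where $Z'_i := Z_i\setminus\{x_i\}$; combined with the identity $Z_{i-1}=N_i\cup Z'_i$---both sides being the $U_{i-1}$-endpoints of $\delta(U_{i-1})$---this yields
\[
|V_{\le}\cap V_{>}| \;\le\; 1 + |Z_{i-1}| - |S\setminus Z'_i|.
\]

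The main obstacle, as I anticipate it, is arranging the order of the $N_i$-classes so that this intermediate bound is at most $3$ throughout step $i$; equivalently, $|S\setminus Z'_i|\ge|Z_{i-1}|-2$ for every intermediate $S$. This is automatic when $|Z_{i-1}|\le 2$, and in the remaining case $|Z_{i-1}|=3$ reduces to placing some element of $N_i\setminus Z'_i$ first in the order for step $i$. Such an element exists because $|Z_i|\le 3$ forces $|Z'_i|\le 2$, and so the $3$-element set $Z_{i-1}=N_i\cup Z'_i$ cannot be contained in $Z'_i$ alone. Once this ordering is chosen at every step, the resulting partition satisfies both conditions of the lemma.
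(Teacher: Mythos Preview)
Your proof is correct and follows essentially the same approach as the paper's: both sweep the segmentation from head to tail, appending at step $i$ the back-edge parallel classes of $x_i$, and both resolve the only nontrivial case $|Z_{i-1}|=3$ by processing first the class through a vertex of $Z_{i-1}\setminus Z_i$ (your $N_i\setminus Z'_i$ is exactly this set). Your bookkeeping is more explicit, and by lumping $\delta(U_t)\cup E(G[Y])$ into $E_k$ you handle the tail end more cleanly than the paper, which leaves that final extension implicit.
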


\begin{proof}
Let $U_0 \subset U_1 \subset U_2 \subset \ldots \subset U_t$ be a $(3,3)$-segmentation of width four of $G$. Let $S$ be a union of parallel classes of $G$. We say $S$ has a {\it good ordering} if there exists a partition of $S$ into parallel classes $\{E_1, \ldots , E_k\}$ such that 
\begin{itemize}
\item $o \Big( \bigcup_{i=1}^j E_i , E \setminus \bigcup_{i=1}^j E_i \Big) \le 3$ holds for every $1 \le j \le k$
\end{itemize}
Note that in order to prove the Lemma, it suffices to show that $E(G)$ has a good ordering.
Clearly, there is a good ordering of $E[G(U_0)]$. If $t=0$, there is nothing left to prove, so  suppose $t\ge 1$. Let $U_i \setminus U_{i-1} = \{x_i\}$, and let 
$Z_i$ be the set of vertices in $U_i$ that are adjacent with an edge in $\delta (U_i)$, for $i= 1, \ldots , t$.  Suppose there exists a good ordering $\{E_0, \ldots, E_l\}$ of $E(G[U_{i-1}])$, and we extend this to a good ordering for $E(G[U_i])$ by appending the parallel classes in $E(x_i, U_{i-1})$ to it in a certain order. Note that by Observation \ref{seg-nbr}\ref{Zile3} we have $|Z_{i}|\le 3$. If $|Z_{i-1}|\le 2$,  adding the parallel classes contained in $E(x_i, U_{i-1})$ in any order to $\{E_1, \ldots, E_l\}$ results in a good ordering. If $|Z_{i-1}| =3$, it follows from Observation \ref{seg-nbr}\ref{xi-d} that there exists $v \in Z_{i-1}\setminus Z_i$. Now, we let $E_{l+1}$ be the parallel class of $E(v, x_i)$, and then we add to $\{E_0, \ldots, E_{l+1}\}$ the other parallel classes between $x_i, U_{i-1}$ (if any) in an arbitrary order. This gives a good ordering of $E(G[U_i])$.
\end{proof}

The above lemma  implies that whenever $G$ has a $(3,3)$-segmentation of type 1, the underlying simple graph of $G$ has a branch decomposition of width 3 where the tree is a caterpillar.  This lemma can also be used to show that $G$ has path-width at most 3, but we will not delve further into this matter.  Our main interest is the following theorem which is an immediate consequence.

\begin{theorem}
If $G$ has a $(3,3)$-segmentation of width four, then $bw(G) \le 3$.  
\end{theorem}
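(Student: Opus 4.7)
The plan is to convert Lemma \ref{type1-par-cw} directly into an explicit caterpillar branch-decomposition of width at most $3$. Applying the lemma yields an ordered partition $E_0, E_1, \ldots, E_k$ of $E(G)$ in which each $E_i$ with $1 \le i \le k-1$ is a single parallel class, and every prefix separation $\bigl(\bigcup_{i \le j} E_i,\, \bigcup_{i > j} E_i\bigr)$ has order at most $3$. For each $i$, I will choose a rooted binary tree $T_i$ whose leaves are the edges of $E_i$ (a single leaf when $|E_i|=1$), and string the roots of $T_0, T_1, \ldots, T_k$ along a spine to form the underlying tree $T$; pairing $T_0$ with $T_1$ at one end and $T_{k-1}$ with $T_k$ at the other ensures that every non-leaf vertex of $T$ has degree exactly $3$, so $T$ together with the natural bijection between its leaves and $E(G)$ is a branch-decomposition of $G$.

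Next I verify that every internal edge of $T$ has width at most $3$, considering three kinds of edges. A spine edge realises exactly one of the prefix cuts from the lemma, so its order is at most $3$. An internal edge of $T_i$ with $1 \le i \le k-1$ separates a proper subset of the parallel class $E_i$ from the rest of $E(G)$; since every edge in $E_i$ has the same two endpoints $u,v$, the vertices shared across this cut lie in $\{u,v\}$, giving order at most $2$. An internal edge of $T_0$ (respectively $T_k$) separates a subset of $E_0$ (respectively $E_k$) from the rest of $E(G)$; since $E_0$ (respectively $E_k$) is a union of parallel classes whose edges all lie inside the head $X$ (respectively the tail $Y$) of the $(3,3)$-segmentation, a vertex set of size at most $3$, the vertices shared across such a cut are contained in $X$ (respectively $Y$), so the order is at most $3$.

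The only real obstacle is routine bookkeeping: confirming cubicity of $T$ at the two extremal spine vertices and handling the degenerate cases (very small $k$, or empty $E_0$ or $E_k$) that require a slightly different assembly. None of these demand a new idea, and taking the maximum width over the three kinds of internal edges gives the desired bound.
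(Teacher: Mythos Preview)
Your approach is exactly the caterpillar construction the paper has in mind; the paper itself only calls the theorem an ``immediate consequence'' of Lemma~\ref{type1-par-cw} and leaves these details to the reader.

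One step needs tightening. Your claim that $E_0$ (respectively $E_k$) consists of edges lying entirely inside the head $X$ (respectively the tail $Y$) of the segmentation is not part of the \emph{statement} of Lemma~\ref{type1-par-cw}; the lemma only promises the prefix-separation bounds, and from $o(E_0, E\setminus E_0)\le 3$ alone you cannot bound the order of an arbitrary sub-separation $(A, E\setminus A)$ with $A\subsetneq E_0$. The property you want is a feature of the specific partition built in the lemma's \emph{proof} (which starts from $E(G[U_0])$ and extends outward). You should either say this explicitly, or---cleaner---note that the proof of the lemma actually produces a ``good ordering'' in which \emph{every} block, including $E_0$ and $E_k$, is a single parallel class; then $T_0$ and $T_k$ are single leaves, your third case vanishes, and the degenerate bookkeeping at the spine ends becomes trivial.
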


It is easy to see that if $H$ is the underlying simple graph of $G$ and $|E(H)| \ge 2$, then $bw(G) = bw (H)$. Also, if $H$ is a subdivision of $G$ and $bw (G) \ge 2$, then $bw (G) = bw (H)$. Therefore, if $G$ is a graph which is obtained from another graph $H$ by sausage reducing it, then $bw (G) = bw (H)$. So, in order to find the branch-width of graphs of types 2, 3, or 4, it suffices to consider only the ones which are sausage reduced, and have distinct underlying simple graphs.  It is then easy to check that all such graphs have branch-width at most three, except for the Octahedron (which is known to have branch-width four).  The following corollary is an immediate consequence of this discussion.

\begin{corollary}
	Let $G$ be a graph which does not immerse $K_{3,3}$. Then $G$ can be constructed from $i$-edge-sums, for $i = 1, 2, 3$ from planar cubic graphs, Octahedron, and graphs with branch-width at most $3$.
\end{corollary}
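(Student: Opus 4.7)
The plan is to combine Theorem \ref{k33-thm} with the reduction procedure outlined in the introduction. Starting from an arbitrary $G$ with no $K_{3,3}$ immersion, I would first apply the three-step reduction described there: delete each cut-edge, then split off each $2$-edge-cut by the identify-and-suppress operation, and finally split off each $3$-edge-cut with both sides of size at least two by identifying each side to one vertex. Each of these steps is precisely the inverse of an $i$-edge-sum for $i = 1, 2, 3$ respectively, so $G$ is expressed as an iterated $i$-edge-sum of the resulting pieces. All of the pieces are $3$-edge-connected and internally $4$-edge-connected, and none of them immerses $K_{3,3}$ since the inverse operations preserve forbidden immersions (as noted in the introduction).

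Next, I would apply Theorem \ref{k33-thm} to each piece $H$ with $|V(H)| \ge 6$, forcing $H$ to have one of the types $0$, $1$, $2$, $3$, or $4$. The finitely many pieces with at most five vertices can be handled directly, and each satisfies $bw(H) \le 3$. Type $0$ pieces are planar and cubic, giving one of the listed building blocks. Type $1$ pieces satisfy $bw(H) \le 3$ by the preceding theorem, which in turn rests on Lemma \ref{type1-par-cw}.

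For types $2$, $3$, and $4$, I would invoke the remarks immediately preceding the corollary: branch-width is unaffected by multiplying edges and by subdivision, and is therefore invariant under sausage reduction. Hence I may replace each such piece by its sausage-reduced underlying simple graph. This yields a finite explicit list (the graphs of Figures \ref{k33-exceptions-1} and \ref{k33-exceptions-2} together with the sausage-reduced shapes arising in type $2$), and a direct case check on these graphs shows $bw \le 3$ for every one of them except the Octahedron, which has $bw = 4$ and appears among the type $4$ graphs of Figure \ref{k33-exceptions-2}. Assembling these building blocks via the $i$-edge-sums extracted in the first step yields the corollary.

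The argument is essentially bookkeeping once Theorem \ref{k33-thm} and the branch-width bound on type $1$ graphs are in hand; the only substantive step is the finite case check on the sausage-reduced representatives of types $2$, $3$, and $4$, together with the branch-width computation identifying the Octahedron as the unique building block that is neither planar cubic nor of branch-width at most three.
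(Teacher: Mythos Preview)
Your plan mirrors the paper's exactly: use the introduction's reduction to cut along edge-cuts of size $1$, $2$, $3$ (the inverses of $i$-edge-sums), invoke Theorem~\ref{k33-thm} on each resulting $3$-edge-connected, internally $4$-edge-connected piece, use the theorem on $(3,3)$-segmentations for type~1, and handle types~2--4 by the finite branch-width check immediately preceding the corollary. The paper offers no further argument, declaring the corollary an immediate consequence of that discussion, so your write-up is essentially the paper's intended proof spelled out.

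One step in your write-up does not go through as stated. You assert that every piece $H$ with $|V(H)| \le 5$ satisfies $bw(H) \le 3$, but $K_5$ is a counterexample: it is $4$-edge-connected (so it survives the reduction intact), it has only five vertices and hence cannot immerse $K_{3,3}$ (which needs six terminals), it is neither planar cubic nor the Octahedron, and $bw(K_5)=\lceil 2\cdot 5/3\rceil=4$. Thus $K_5$ (and any multigraph with underlying simple graph $K_5$, since branch-width is unchanged by multiplying edges) is a piece that your argument does not place among the listed building blocks. The paper's discussion is silent on pieces with fewer than six vertices and so shares this gap; strictly, the corollary as stated needs $K_5$ added to its list of sporadic building blocks alongside the Octahedron.
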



\subsubsection*{Forbidding $K_{3,3}$ and $K_5$}
Observe that among graphs of types 0-4, there is only one which immerses $K_5$---Octahedron. Therefore, as a corollary of Theorem \ref{k33-thm} we obtain the following characterizations of the graphs which exclude both $K_{3,3}$ and $K_5$ as immersion, the second of which strengthens the earlier theorem of Giannopoulou et al. (Theorem \ref{intro-k33-greek}).
\begin{corollary}
	\label{k33-k5}
	Let $G$ be a $3$-edge-connected, internally $4$-edge-connected graph with $|V(G)|\ge 6$ that does not immerse $K_{3,3}$ or $K_5$.  Then
	\begin{itemize}
		\item 
		$G$ has one of the types $0, 1, 2, 3$, or $4$ except for the Octahedron.
		\item 
		$G$ is either planar and cubic, or has branch-width at most three.
	\end{itemize}
\end{corollary}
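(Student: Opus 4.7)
The plan is to extract Corollary \ref{k33-k5} directly from Theorem \ref{k33-thm} together with the branch-width discussion of the previous subsection; essentially everything needed has already been proved or observed.

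Since $G$ does not immerse $K_{3,3}$, Theorem \ref{k33-thm} forces $G$ into one of types $0, 1, 2, 3$, or $4$. For the first bullet I would verify that the Octahedron is the only one of these which immerses $K_5$. This is immediate in several cases: a type 0 graph is cubic, so no vertex has degree as large as $\Delta(K_5) = 4$, and hence no vertex can serve as a $K_5$-terminal. A type 1 graph has a $(3,3)$-segmentation of width $4$; since every $3$-subset of $V(K_5)$ has degree $6$, the argument used for part \ref{t1nok33} of Observation \ref{containk33obs} transfers verbatim and rules out a $K_5$ immersion. For type 2 graphs, the restrictive $G^*$ description together with the small edge-cuts coming from $W, W'$ eliminate a $K_5$ immersion by direct inspection of the three subcases (2A), (2B), (2C). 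Finally, the graphs of types 3 and 4 form the explicit finite lists of Figures \ref{k33-exceptions-1} and \ref{k33-exceptions-2}; a one-line modification of the computer check of Lemma \ref{k33-code} (replacing the $K_{3,3}$-immersion test by a $K_5$-immersion test), combined with part \ref{sausageok} of Observation \ref{containk33obs}, confirms that the Octahedron is the unique survivor.

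For the second bullet, a type 0 graph satisfies the ``planar and cubic'' alternative by definition, so it remains to establish $bw(G)\le 3$ for the other types. Type 1 is precisely the theorem proved just above. For types 2, 3, and 4, branch-width is invariant under sausage reduction and under the parallel-class simplification recorded in the text, so we may reduce the check to the finite list of sausage-reduced representatives with distinct underlying simple graphs. Each such graph other than the Octahedron has branch-width at most $3$ by direct calculation, and the Octahedron itself has been ruled out by the first bullet. The only real obstacle is the small finite case analysis for types 3 and 4, which is routine and in any case can be delegated to the computer check already in place.
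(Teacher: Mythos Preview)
Your proposal is correct and follows essentially the same route as the paper: apply Theorem \ref{k33-thm} to place $G$ in one of the five types, observe that the Octahedron is the unique graph among these that immerses $K_5$, and then invoke the branch-width discussion of the previous subsection (the theorem for type~1, plus the reduction to a finite check for types~2--4 via sausage reduction and simplification). The only difference is cosmetic: the paper records the first bullet as a one-line observation, whereas you spell out the type-by-type verification; either way the content is the same.
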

\section*{Acknowledgment}
The authors would like to thank Jessica McDonald who worked with them at the start of this project and contributed enormously to their understanding of the problem. The second author gratefully acknowledges Stefan Hannie's valuable help with the code used in this project.
\bibliographystyle{plain}
\bibliography{references}
\end{document}